\newtheorem{thm}{Theorem}[section]
\newtheorem{cor}[thm]{Corollary}
\newtheorem{lem}[thm]{Lemma}
\newtheorem{prop}[thm]{Proposition}
\theoremstyle{definition}
\theoremstyle{remark}
\newtheorem{rem}[thm]{Remark}
\theoremstyle{example}
\theoremstyle{conjecture}
\numberwithin{equation}{section}
\newcommand{\del}{\delta}
\newcommand{\R}{{\mathbb R}}
\newcommand{\Q}{{\mathbb Q}}
\newcommand{\N}{{\mathbb N}}
\newcommand{\XX}{{\mathbb X}}
\newcommand{\YY}{{\mathbb Y}}
\newcommand{\Z}{{\mathbb Z}}
\newcommand{\calA}{{\mathcal A}}
\newcommand{\calD}{{\mathcal D}}
\newcommand{\calM}{{\mathcal M }}
\newcommand{\calS}{{\mathcal S}}
\newcommand{\calT}{{\mathcal T}}
\begin{document}

\title[Maximal truncated Calder\'on-Zygmund operators]{A unified method for maximal truncated Calder\'on-Zygmund operators in general function spaces by sparse domination}%

\author{Theresa C. Anderson and Bingyang Hu}

\address{Theresa C. Anderson: Department of Mathematics, Purdue University, 150 N. University St., West Lafayette, IN 47907, U.S.A.}%
\email{tcanderson@purdue.edu}

\address{Bingyang Hu: Department of Mathematics, University of Wisconsin, Madison, 480 Lincoln Dr., Madision, WI 53705, U.S.A.}%
\email{bhu32@wisc.edu}

\date{\today}%


\maketitle

{\bf Abstract}. 
In this note we give simple proofs of several results involving maximal truncated Calde\'on-Zygmund operators in the general setting of rearrangement invariant quasi-Banach function spaces by sparse domination.  Our techniques allow us to track the dependence of the constants in weighted norm inequalities; additionally, our results hold in $\mathbb{R}^n$ as well as in many spaces of homogeneous type. 

\bigskip

\section{Introduction}

\bigskip

Sparse domination has been an extremely active area of research recently in harmonic analysis.  This technique dates back to Andrei Lerner from his alternative, simple proof of the $A_2$ theorem \cite{AL1, AL2}, proved originally by Hyt\"onen \cite{TH}.  Lerner is able to bound all \emph{Calder\'on-Zygmund Operators} (CZOs) by a supremum of a special collection of dyadic, positive operators called \emph{sparse operators}.  This bound led almost instantly to a proof of the sharp dependence of the constant in related weighted norm inequalities, the $A_2$ theorem, a problem that had been actively worked on for over a decade.

There have been many improvements to Lerner's techniques as well as to extending his ideas to a wide range of spaces and operators.  We mention a few of these in our references, however, these results are too numerous to list fully; we refer the interested reader to the many recent papers and monographs involving sparse domination for more references and background.   We could have made use of some of these improvements, such as \cite{CR, La, LN, CDO}, but since we are looking at weighted norm inequalities, Lerner's original technology also works.

In this note, we concentrate on several of the results in the paper \cite{CCMP} involving the maximal truncated CZO.  Specifically, we study the behavior of the maximal truncated CZO on rearrangement invariant Banach function spaces (RIBFS), rearrangement invariant quasi-Banach function spaces (RIQBFS); we also show some modular inequalities.  To bring our results into context, we recall a few definitions.

Let $T$ be a Calder\'on-Zygmund operator in $\R^n$ with standard kernel $K$ satisfying the following size and smoothness conditions
\begin{enumerate}
\item[(1).] $|K(x, y)| \le \frac{c}{|x-y|^n}$, where $x \neq y$;
\item[(2).] There exists $0<\del \le 1$ such that 
$$
|K(x, y)-K(x', y)|+|K(y, x)-K(y, x')| \le c \frac{|x-x'|^\del}{|x-y|^{n+\del}},
$$
where $|x-x'| \le |x-y|/2$ and $c$ is some absolute constant.
\item[(3).] $T$ is bounded on $L^2$.
\end{enumerate}

Given a Calder\'on-Zygmund operator $T$, define its maximal truncated operator by 
$$
T^{**}f(x)=\sup_{0<\varepsilon_1<\varepsilon_2} \left| \int_{\varepsilon_1<|x-y|<\varepsilon_2} K(x, y)f(y)dy \right|.
$$

We say that a weight $w$ belongs to the Muckenhoupt class $A_p, 1<p<\infty$, if for every cube $Q \subset \R^n$,
$$
\left( \frac{1}{|Q|} \int_Q w(x)dx \right) \left( \frac{1}{|Q|} \int_Q w(x)^{1-p'} dx \right)^{p-1} \le [w]_{A_p}.
$$
When $p=1$, $w$ belongs $A_1$ if $Mw(x) \le [w]_{A_1}w(x), a.e$. Moreover, we denote $A_\infty=\bigcup_{p \ge 1} A_p$. In this paper, we use the Fujii-Wilson definition of the Muckenhoupt class $A_\infty$. Namely, the weight $w$ belongs to the class $A_\infty$ if and only if
$$
[w]_{A_\infty}:=\sup_{Q} \frac{1}{w(Q)} \int_Q M(w\chi_Q)<\infty,
$$
where the supremum is taken with respect to all cubes in $\R^n$ whose sides are parallel to the axes. 

In this language, the $A_2$ theorem states that for $w \in A_p, p>1$,  
$$
\|T\|_{L^p(w) \mapsto L^p(w)} \le C(T, p) [w]_{A_p}^{\max\left\{1, \frac{1}{p-1} \right\}}
$$
and the exponent is sharp. We refer the readers to the books \cite{LG1, LG2} for more information. 


By using sparse domination, we show that under certain conditions, the following hold, with explicit dependence of the constant $C$ on the weight $w$ (which is detailed and discussed in the body of this paper):
\begin{enumerate}
\item[(i).] 
$$
\|T^{**} f\|_{\XX(w)} \le C\|f\|_{\XX(w)},
$$ 
where $\XX$ is some RIBFS or RIQBFS (see Theorem \ref{thm01} and \ref{thm02} for a precise statement);
\item[(ii).] 
$$ 
\int_{\R^n} \phi(T^{**}f(x)) w(x)dx \le C \int_{\R^n} \phi(f(x)) w(x)dx,
$$
where $\phi$ is a $N$-function and $w$ is some Muckenhoupt weight.
\end{enumerate}
As stated, additionally we track the dependence of the constants on the weight characteristic and provide some commentary.  In particular, the dependence on the constant that we obtain improves on that in \cite{HP1} in certain cases, even on the space $L^2(w)$ (see remarks following Theorem \ref{thm02}).    

Our approach simplifies the original proof, which is done by using extrapolation \cite{CCMP}. Moreover, by taking the advantage of the sparse domination, we can track the constant $C$ and study its dependence with respect to $w$.  Finally this technique is general enough to hold in many spaces of homogeneous type (SHT).  These are doubling measure spaces equipped with a quasimetric -- more references and a precise definition are contained in \cite{AV}.  For simplicity, we structure our results in $\mathbb{R}^n$, and we indicate throughout the note where additional steps are needed for SHT and what they are; we mention any restrictions on the space when they arise. 

The structure of the paper is as follows: Section 2 provides background, especially concerning RIQBFS, and Section 3 includes our main results, proofs, and remarks.

Throughout this paper, for $a, b \in \R$, $a \lesssim b$ ($a \gtrsim
b$, respectively) means there exists a positive number $C$, which is
independent of $a$ and $b$, such that $a \leq Cb$ ($ a \geq Cb$,
respectively).

\subsection{Acknowledgements}
Anderson's work was supported by NSF DMS-1502464.  The authors thank David Cruz-Uribe for helpful discussions.

\bigskip

\section{Preliminaries}

\bigskip

In this section, we collect several basic facts for RIBFS, RIQBFS and modular inequalities.


\subsection{RIBFS and RIQBFS}


Denote $\calM$ as the set of measurable functions on $(\R^n, dx)$ and $\calM^+$ for the nonnegative ones. A \emph{rearrangement invariant Banach norm} is a mapping $\rho: \calM^+ \mapsto [0, \infty]$ such that the following properties hold:
\begin{enumerate}
\item[(a).] $\rho(f)=0 \Leftrightarrow f=0, a.e.$; $\rho(f+g) \le \rho(f)+\rho(g)$; $\rho(af)=a\rho(f)$ for $a \ge 0$;
\item[(b).] If $0 \le f \le g, a.e.$, then $\rho(f) \le \rho(g)$;
\item[(c).] If $f_n \uparrow f, a.e.$, then $\rho(f_n) \uparrow \rho(f)$;
\item[(d).] If $E$ is a measurable set such that $|E|<\infty$, then $\rho(\chi_E)<\infty$, and $\int_E fdx \le C_E \rho(f)$, for some constant $0<C_E<\infty$, depending on $E$ and $\rho$, but independent of $f$.
\item[(e).] $\rho(f)=\rho(g)$ if $f$ and $g$ are equimeasurable, that is, $d_f(\lambda)=d_g(\lambda), \lambda \ge 0$, where $d_f$ ($d_g$ respectively) denotes the distribution function of $f$ ($g$ respectively). 
\end{enumerate}
By means of $\rho$, the \emph{rearrangement invariant Banach function spaces (RIBFS)} is defined as
$$
\XX=\left\{f \in \calM: \|f\|_\XX:=\rho(|f|)<\infty \right\}.
$$
Moreover, the \emph{associate space of $\XX$} is the Banach function $\XX'$ defined by 
$$
\XX'=\left\{f \in \calM, \|f\|_{\XX'}=\sup \left\{ \int_{\R^n} fgdx: g \in \calM^+, \rho(g) \le 1 \right\}<\infty \right\}.
$$
Note that in the present setting, $\XX$ is an RIBFS if and only if $\XX'$ is an RIBFS  (see, e.g., \cite[Chapter 2, Corollary 4.4]{BS}).  For SHT, we require that the underlying space be \emph{resonant} (that is, a $\sigma$-finite space that is completely non-atomic, or is atomic with all atoms having equal measure).

An important feature for these spaces is the Lorentz-Luxemburg theorem, which asserts that $\XX=\XX''$ and hence we have
$$
\|f\|_\XX=\sup \left\{ \left| \int_{\R^n} f g dx \right|: g \in \XX', \|g\|_{\XX'} \le 1 \right\}.
$$
Recall the decreasing rearrangement of $f$ is the function $f^*$ on $[0, \infty)$ defined by 
$$
f^*(t)=\inf\left\{ \lambda \ge 0: d_f(\lambda) \le t \right\}, \ t \ge 0.
$$
It is well-known that $f^*$ is equimeasurable with $f$ and hence by Luxemburg's representation theorem, there exists a RIBFS $\overline{\XX}$ over $(\R^+, dx)$, such that $f  \in \XX$ if and only if $f^* \in \overline{\XX}$ with $\|f\|_{\XX}=\|f^*\|_{\overline{\XX}}$, that is, the mapping $f \mapsto f^*$ is an isometry. Furthermore, for associate space, we have
$\overline{\XX}'=\overline{\XX'}$ and $\|f\|_{\XX'}=\|f^*\|_{\overline{\XX}'}$. We refer the readers to the book \cite{BS} for a detailed introduction to RIBFS.

Let $w \in A_\infty$, $\XX$ a RIBFS and $\overline{\XX}$ as its corresponding space in $(\R^+, dx)$. We consider the weighted version of the space $\XX$ as follows:
$$
\XX(w)=\left\{ f \in \calM: \|f\|_{\XX(w)}:=\|f_w^*\|_{\overline{\XX}}<\infty \right\},
$$
where $f_w^*(t)=\inf\left\{ \lambda \ge 0: w_f(\lambda) \le t \right\}, t \ge 0$ is the decreasing rearrangement induced by $w_f$, the distribution function of $f$ with respect to the measure $wdx$ (note that we need a resonant space to apply the representation theorem). It is known that $\XX'(w)=\XX(w)'$ (see \cite{CCMP}).

Next, we recall \emph{Boyd indices} of a RIBFS, which are closely related to some interpolation properties (see, \cite[Boyd's Theorem]{BS}). Consider the dilation operator
$$
D_t f(s)=f\left( \frac{s}{t} \right), \ 0<t<\infty, f \in \overline{\XX},
$$
with the norm
$$
h_\XX(t)=\|D_t\|_{\overline{\XX} \mapsto \overline{\XX}},  \ 0<t<\infty.
$$
Then, the \emph{lower and upper Boyd indices} are defined, respectively, by
$$
p_\XX=\lim_{t \to \infty} \frac{\log t}{\log h_\XX(t)}=\sup_{1<t<\infty} \frac{\log t}{\log h_\XX(t)}, \quad q_\XX=\lim_{t \to 0^{+}} \frac{\log t}{\log h_\XX(t)}=\inf_{0<t<1} \frac{\log t}{\log h_\XX(t)}.
$$
We have that $1 \le p_\XX \le q_\XX \le \infty$, which follows from the fact that $h_\XX(t)$ is submultiplicative, that is, $h_\XX(ts) \le h_\XX(t) h_\XX(s), \forall s, t>0$. The relationship between the Boyd indices of $\XX$ and $\XX'$ is th following: 
$p_{\XX'}=(q_\XX)'$ and $q_{\XX'}=(p_\XX)'$, where $p$ and $p'$ are conjugate exponents. (see, e.g., \cite{BS, LM}).

For each $0<r<\infty$ and $\XX$ a RIBFS, we consider the $r$ exponent of $\XX$. Namely, 
$$
\XX^r=\left\{f \in \calM: |f|^r \in \XX\right\},
$$
with the norm $\|f\|_{\XX^r}=\| |f|^r \|_\XX^{\frac{1}{r}}$. Note that the definition of Boyd indices extends to $\XX^r$. Namely, we have $p_{\XX^r}=p_\XX \cdot r$ and $q_{\XX^r}=q_\XX \cdot r$. It is known that if $\XX$ is a RIBFS and $r \ge 1$, then $\XX^r$ is still a RIBFS, however, for $0<r<1$, the space $\XX^r$ is not necessarily Banach (see, e.g., \cite{CCMP}). Hence, it is natural to consider the quasi-Banach case. 

We start with the definition of the quasi-Banach function norm. Again, let $\rho': \calM^+ \mapsto [0, \infty)$ be a mapping. We say $\rho'$ is a \emph{rearrangement invariant quasi-Banach function norm} if $\rho$ satisfies the defining condition (a), (b), (c), (e) with the triangle inequality replaced by 
$$
\rho'(f+g) \le C(\rho'(f)+\rho'(g)),
$$
where $C$ is an absolute constant. Then, similarly, the \emph{rearrangement invariant quasi-Banach function spaces (RIQBFS)} is defined as the collection of all measurable functions such that $\rho'(|f|)<\infty$. In addition, for the purpose of making $\XX^r$ become a RIBFS for some large power $r$, where $\XX$ is some RIQBFS, we impose the following \emph{$p$-convex} condition on $\XX$ for $p>0$ (see, e.g., \cite{GK}) by requiring
$$
\left\| \left( \sum_{j=1}^N |f_j|^p \right) ^{\frac{1}{p}} \right\|_\XX \lesssim \left( \sum_{j=1}^N \|f_j\|_\XX^p \right)^{\frac{1}{p}}.
$$
Clearly, the $p$-convexity condition is equivalent to the fact that $\XX^{\frac{1}{p}}$ is an RIBFS and again by Lorentz-Luxemburg's theorem, we have
$$
\|f\|_\XX \simeq \sup \left\{ \left( \int_{\R^n} |f(x)|^p g(x)dx \right)^{\frac{1}{p}}: g \in \calM^+, \|g\|_{\YY'} \le 1 \right\},
$$
where $\YY'$ is the associate space of the RIBFS $\YY=\XX^{\frac{1}{p}}$. In a similar fashion, by using the fact that powers commutes with $f^*$, we can define $\XX(w)$ for a RIQBFS $\XX$, $w \in A_\infty$ and $0<r<\infty$, and we have $\XX(w)^r=\XX^r(w)$.  

\begin{rem}
We list some typical examples of RIBFS and RIQBFS here: the Lebesgue space $L^p$, the Lorentz space $L^{p, q}$, the Orlicz spaces $L^\phi$, the Lorentz $\Gamma$-spaces $\Gamma^q(v)$ and the Marcinkiewicz spaces $\mathbb M_\varphi$. We refer the readers to the work \cite{CCMP} for a detailed introduction to these spaces, as well as their Boyd indices.
\end{rem}


\subsection{Modular inequality}

To set up our modular inequality results, we start recalling some basic properties of Young functions, as well as $N$-functions. Let $\Phi$ be the collection of all the functions $\phi: [0, \infty) \mapsto [0, \infty)$ satisfy the following conditions:
\begin{enumerate}
\item[(1).] $\phi$ is non-negative and increasing;
\item[(2).] $\phi(0^+)=0$ and $\phi(\infty)=\infty$.
\end{enumerate}
If $\phi \in \Phi$ is convex, then we say $\phi$ is a \emph{Young function}. Moreover, an \emph{$N$-function} $\phi$ is a Young function such that
$$
\lim_{t \to 0^+} \frac{\phi(t)}{t}=0 \quad \textrm{and} \quad \lim_{t \to \infty} \frac{\phi(t)}{t}=\infty.
$$
We say that $\phi \in \Phi$ is \emph{quasi-convex} if there exists a convex function $\widetilde{\phi}$ and $a_1 \ge 1$ such that 
$$
\widetilde{\phi}(t) \le \phi(t) \le a_1\widetilde{\phi}(a_1t), \quad t \ge 0.
$$
For a positive increasing function $\phi$, we define the lower and upper dilation indices of $\phi$, respectively, by
$$
i_\phi=\lim_{t \to 0^+} \frac{\log h_\phi(t)}{\log t}=\sup_{0<t<1} \frac{\log h_\phi(t)}{\log t}, \quad I_\phi=\lim_{t \to \infty}  \frac{\log h_\phi(t)}{\log t}=\inf_{1<t<\infty} \frac{\log h_\phi(t)}{\log t},
$$
where 
$$
h_\phi(t)=\sup_{s>0} \frac{\phi(st)}{\phi(s)}, \quad t>0.
$$
Observe that $0 \le i_\phi \le I_\phi \le \infty$. Moreover, as we mentioned before, the dilation indices are closely related to Boyd indices. More precisely, we have
$$
p_\XX=\frac{1}{I_\phi}, \quad q_\XX=\frac{1}{i_\phi},
$$
where $\XX$ is the Marcinkiewicz space induced by $\phi$. (see \cite{CCMP}), while
$$
p_\XX=i_\phi, \quad q_\XX=I_\phi
$$
where $\XX$ is the Orlicz space induced by $\phi$. (see \cite{CMP}). 

The following $\Delta_2$ condition is crucial. Given a function $\phi \in \Phi$, we say that $\phi$ satisfies the $\Delta_2$ condition if $\phi$ is doubling, that is, 
$$
\phi(2t) \le C \phi(t), \ t>0.
$$
It is well-known that if $\phi$ is quasi-convex, then $i_\phi \ge 1$, $\phi \in \Delta_2$ if and only if $I_\phi<\infty$ and $\overline{\phi} \in \Delta_2$ if and onlly if $i_\phi>1$, where $\overline{\phi}(s)=\sup_{t>0} \left\{st-\phi(t) \right\}, s>0$ is the \emph{complementary function} of $\phi$. (see, e.g., \cite{RR}). Here are some main properties of $\overline{\phi}$.

\begin{enumerate}
\item[1.] (Young's inequality) $st \le \phi(s)+\overline{\phi}(t), s, t \ge 0$;
\item[2.] When $\phi$ is an $N$-function, then $\overline{\phi}$ is also a $N$-function, and the following inequality holds:
\begin{equation} \label{eq006}
t \le \phi^{-1}(t) \overline{\phi}^{-1}(t) \le 2t,  \ t \ge 0;
\end{equation}
\item[3.] If $\phi$ is an $N$-function, then there exists $0<\alpha<1$ such that $\phi^\alpha$ is quasi-convex if and only if $\overline{\phi} \in \Delta_2$, where $\phi^\alpha(t)=\phi(t)^\alpha$.
\end{enumerate}

Now we are ready to define the modular inequality. Given $w \in A_\infty$ and $\phi \in \Phi$, we define the modular
$$
\rho_w^\phi(f)=\int_{\R^n} \phi(|f(x)|)w(x)dx.
$$
The collection of functions
$$
M_w^\phi=\left\{f: \rho_w^\phi(f)<\infty\right\}
$$
is referred to as a modular space. A sublinear operator $T$ satisfies a modular inequality on $M_w^\phi$ if  there exists constant $c_1, c_2>0$ such that 
$$
\rho_w^\phi(Tf) \le c_1 \rho_w^\phi(c_2 f)
$$ 
and satisfies a weak modular inequality on $M_w^\phi$ if there exists $c_3, c_4>0$ such that
$$
\sup_{\lambda} \phi(\lambda) w\{x \in \R^n: f(x)>\lambda\} \le c_3 \sup_{\lambda} \phi(\lambda) w\{x \in \R^n: c_4g(x)>\lambda\}.
$$
Note that weighted modular estimates are not necessarily associated with Banach or quasi-Banach spaces and so duality cannot be used.  Modular inequalities were originally developed as a means for providing endpoint estimates for certain operators, such as iterates of the Hardy-Littlewood maximal function \cite{CCMP}. 

\bigskip

\section{Main result}

\bigskip

We need some dyadic calculus from \cite{AL1, AL2}. By a \emph{dyadic grid $\calD$}, we mean a collection of cubes with the following properties:
\begin{enumerate}
\item[(i).] For any $Q \in \calD$ its sidelength $\ell_Q$ is of the form $2^k, k \in \Z$;
\item[(ii).] $Q \cap R \in \{Q, R, \emptyset\}$ for any $Q, R \in \calD$;
\item[(iii).] The cubes of a fixed sidelength $2^k$ form a partition of $\R^n$.
\end{enumerate}
An important property for a dyadic grid is the Three Lattice Theorem. It asserts that there are $3^n$ dyadic grids $\calD_\alpha$ such that for any cube $Q \subset R^n$ there exists a cube $Q_\alpha \in \calD_\alpha$ such that $Q \subset Q_\alpha$ and $\ell_{Q_\alpha} \le c_nl_Q$. Moreover, in \cite{JC}, the author showed that the optimal number of the dyadic grids is $n+1$ (see \cite{HP}, \cite{JC}, \cite{AL1} for a discussion). 

We say $\calS \subset \calD$ is a \emph{sparse family} of cubes if for every $Q \in \calS$, 
$$
\left| \bigcup_{P \in \calS, P \subsetneq Q} P \right| \le \frac{1}{2}|Q|.
$$
Equivalently, if we define
$$
E(Q)=Q \backslash \bigcup_{P \in \calS, P \subsetneq Q} P,
$$
then the sets $E(Q)$ are pairwise disjoint and $|E(Q)| \ge \frac{1}{2}|Q|$. Note that in general, the constant $\frac{1}{2}$ in the above definition can be replaced by any $\gamma \in (0, 1)$. However, we will use $\frac{1}{2}$ for simplicity.  Note that the concept of dyadic grid has been well-studied in SHT, as well as the analogue of the Three lattice theorem (called Mei's theorem) (see \cite{AV}, \cite{HK}, \cite{LN}, to name a few). 

Given a dyadic grid $\calD$ and a sparse family $\calS \subset \calD$, we define the \emph{dyadic positive operator} $\calA$ by
$$
\calA f(x)=\calA_{\calD, \calS} f(x)=\sum_{Q \in \calS} f_{Q} \chi_{Q}(x),
$$
where $f_Q=\frac{1}{|Q|} \int_Q f$. Moreover, given a measurable function $f$ on $\R^n$ and a cube $Q$, we define the \emph{median value} of $f$ over $Q$ by
$$
m_f(Q):=\sup\left\{\lambda: \max\left\{|\left\{ x \in Q: f(x)>\lambda\right\}|, |\left\{x \in \Q: f(x)<\lambda \right\}|\right\} \le |Q|/2\right\}.
$$
An important property of this quantity is the following: if $f \in L^1$, then $|m_Q(T^{**}f)| \to 0$ as $|Q| \to \infty$. Indeed, by the proof of \cite[Lemma 5.1]{AV}, we see that
$$
|m_{Q}(T^{**}f)| \le \frac{\|T^{**}f\|_{L^{1, \infty}(Q)}}{|Q|} \le \|T^{**}\|_{1, \infty} \frac{\|f\|_{L^1}}{|Q|},
$$
where it is well-known that $\|T^{**}\|_{1, \infty}<\infty$ (see, e.g., \cite[Theorem 4.2.4]{LG2}).  In SHT this is true as well, as long as $\mu(X) = \infty$, using the weak bound for $T^{**}$ from \cite{HYY} (note that they impose the H\"ormander condition on their operator).
  
Finally, given any $a>0$ and $Q$ a cube, we denote $aQ$ as the cube with the same center of $Q$ and sidelength $a\ell_Q$. 

The following theorem is crucial.

\begin{thm} \label{ALsparse}
Let $T$ be a Calder\'on-Zygmund operator in $\R^n$ with standard kernel $K$ (see the introduction) and $\calD$ a dyadic grid. Then the following assertions hold:
\begin{enumerate}
\item[(1).] Let $f$ be any measurable function on $\R^n$. For any $Q_0 \in \calD$, there exists a sparse family $\calS \subset \calD$ such that for a.e. $x \in Q_0$, 
\begin{equation} \label{eq001}
|T^{**}f(x)-m_{Q_0}(T^{**}f)| \lesssim Mf(x)+\sum_{m=0}^\infty \frac{1}{2^{m\delta}} \calT_{\calS, m} |f|(x), 
\end{equation}
where $M$ is the Hardy-Littlewood maximal operator and 
$$
\calT_{\calS, m}f(x)=\sum_{Q \in \calS} f_{2^m Q} \chi_{Q}(x), m \in \N.
$$
\item[(2).] Let $X$ be a Banach function space, that is, the very last condition (e) of RIBFS is not required. Then
\begin{equation} \label{eq002}
\|Mf\|_\XX \lesssim \sup_{\calD, \calS} \|A_{\calD, \calS} f\|_\XX, \quad f \ge 0,
\end{equation}
and for any $m \in \N$, 
\begin{equation} \label{eq003}
\sup_{\calS \in \calD} \|\calT_{\calS, m}f\|_\XX \lesssim m \sup_{\calD, \calS} \|\calA_{\calD, \calS}f\|_\XX, \quad f \ge 0.
\end{equation}
\end{enumerate}
In particular, we have for any Banach function space $\XX$, 
\begin{equation} \label{eq004}
\|T^{**}f\|_\XX \lesssim \sup_{\calD, \calS} \|\calA_{\calD, \calS} |f|\|_\XX.
\end{equation}
\end{thm}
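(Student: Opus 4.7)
The argument proceeds in three conceptual steps matching the theorem's structure: a pointwise Lerner-type decomposition of $T^{**}f$ (part (1)), norm control of $M$ and the shifted operators $\calT_{\calS,m}$ by genuine sparse operators (part (2)), and a final assembly yielding (\ref{eq004}).

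For part (1), fix $Q_0 \in \calD$ and run Lerner's local mean oscillation construction: inductively select maximal dyadic subcubes $P \subsetneq Q$, starting from $Q=Q_0$, at which either $f_P$ exceeds a large multiple of $f_Q$ or a local oscillation threshold of $T^{**}f$ is crossed. By maximality the resulting family $\calS \subset \calD$ is sparse, and Lerner's formula reduces pointwise control of $|T^{**}f-m_{Q_0}(T^{**}f)|$ on $Q_0$ to estimating the local oscillation $\omega_\lambda(T^{**}f;Q)$ for each $Q\in\calS$. Split $f = f\chi_{2Q} + f\chi_{(2Q)^c}$: the weak-$(1,1)$ endpoint for $T^{**}$ (cf.\ \cite{LG2}) absorbs the local piece $T^{**}(f\chi_{2Q})$ into $Mf(x)$, while the annular decomposition $(2Q)^c=\bigsqcup_{m\geq 1}(2^{m+1}Q\setminus 2^mQ)$ combined with the H\"older smoothness (condition (2) on $K$) yields a bound of order $2^{-m\delta}f_{2^mQ}$ on the $m$-th annulus. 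Reassembling over $Q\in\calS$ produces precisely the sum $\sum_m 2^{-m\delta}\calT_{\calS,m}|f|(x)$ appearing in (\ref{eq001}).

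For part (2), the inequality (\ref{eq002}) follows from the classical pointwise domination $Mf\lesssim \sup_{\calD,\calS}\calA_{\calD,\calS}f$, obtained by building a sparse family at each dyadic level $\{Mf>2^k\}$ via Calder\'on-Zygmund cubes. For (\ref{eq003}), invoke the Three Lattice Theorem: for each $Q\in\calS$ select $\widetilde Q\in\calD_\alpha$ with $2^mQ\subset \widetilde Q$ and $\ell_{\widetilde Q}\leq c_n 2^m\ell_Q$, so $f_{2^mQ}\lesssim f_{\widetilde Q}$. Cubes of $\calS$ mapped to the same $\widetilde Q$ have sidelengths differing from $\ell_{\widetilde Q}$ by at most a factor of $2^m$, so only $O(m)$ generation-levels contribute, and sparsity of $\calS$ guarantees that within each level the collection $\{\widetilde Q\}$ is sparse. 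Consequently $\{\widetilde Q\}$ decomposes into $O(m)$ sparse families, and summing over them accounts for the linear factor $m$ in (\ref{eq003}).

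Step (iii) is routine assembly: take an exhaustion $Q_0^{(k)}\uparrow\R^n$, use $m_{Q_0^{(k)}}(T^{**}f)\to 0$ (from the remark preceding the theorem) to upgrade (\ref{eq001}) to a global pointwise bound on $\R^n$, take $\XX$-norms, and apply (\ref{eq002})--(\ref{eq003}) together with the convergent series $\sum_{m\geq 0}m\cdot 2^{-m\delta}$. The principal technical obstacle lies in part (1), where the supremum defining $T^{**}$ forces reliance on the weak-$(1,1)$ endpoint rather than direct kernel estimates for the local piece; in the SHT setting one substitutes the weak-type result of \cite{HYY} and Mei's lattice theorem for the Three Lattice Theorem, with the rest of the argument intact.
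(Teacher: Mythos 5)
The paper offers no proof of Theorem \ref{ALsparse}: it cites Lerner \cite{AL1}, with the observation that the adaptation to $T^{**}$ replaces linearity by sublinearity and that the SHT version follows from \cite{AV}, \cite{HYY} and Mei's lattice theorem. Your sketch essentially reconstructs Lerner's argument, and the architecture --- local mean oscillation decomposition, splitting $f$ into a near piece handled by the weak $(1,1)$ bound for $T^{**}$ and a far piece handled by the H\"older smoothness of the kernel, the Three Lattice Theorem for the shifted operators $\calT_{\calS,m}$, and the final assembly via the convergent series $\sum_{m\geq 0} m\, 2^{-m\delta}$ --- is the correct one.

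However, your explanation of the linear-in-$m$ factor in \eqref{eq003} contains a genuine error. You assert that cubes $Q\in\calS$ sent to a common $\widetilde Q$ have sidelengths differing from $\ell_{\widetilde Q}$ by up to a factor of $2^m$, so that $O(m)$ generation levels contribute. In fact the Three Lattice Theorem forces $\ell_Q \sim 2^{-m}\ell_{\widetilde Q}$ up to a purely dimensional factor, so all such $Q$ lie within $O(1)$ generations of one another; the map $Q\mapsto \widetilde Q$ has bounded fibers, but that alone does not give sparsity of $\{\widetilde Q\}$. Worse, a naive transfer of the Carleson packing condition from $\calS$ to $\{\widetilde Q\}$ inflates the Carleson constant by the volume ratio $|\widetilde Q|/|Q|\approx 2^{mn}$, which is exponential in $m$, not linear. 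The factor $m+1$ in Lerner's Lemma 3.2 comes from a more careful decomposition of $\calS$ into $O(m)$ subfamilies chosen so that within each the shifted cubes genuinely form a sparse collection with dimensional constants; this is the technical heart of the lemma, and your sketch elides it incorrectly. As a smaller point, in part (1) the sparse stopping family is generated by the local oscillations of $T^{**}f$ itself (the function being decomposed), not by a condition on the averages $f_P$ of $f$; the quantities $f_{2^mQ}$ appear only after $\omega_\lambda(T^{**}f;Q)$ is estimated using the kernel bounds.
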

(see \cite{AL1}).  This result holds in SHT by following the proofs in \cite{AV}, \cite{AL1} for $T$, but substituting the sublinearity of $T^{**}$ for the linearity of $T$.


\subsection{Maximal truncated Calder\'on-Zygmund operator on RIBFS and RIQBFS}


We start by considering the behavior of $T^{**}$ on RIBFS and RIQBFS.

\begin{lem} \label{PerezSIW}
Let $1<p<\infty$ and let $w \in A_p$. Then $w \in A_{p-\varepsilon}$ where 
$$
\varepsilon=\frac{p-1}{1+2^{n+1}[\sigma]_{A_\infty}}
$$
where $\sigma=w^{1-p'}$ is the dual weight. Furthermore
$$
[w]_{A_{p-\varepsilon}} \le 2^{p-1}[w]_{A_p}.
$$
\end{lem}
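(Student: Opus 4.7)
The plan is to reduce the statement to the sharp reverse H\"older inequality of Hyt\"onen--P\'erez applied to the dual weight $\sigma = w^{1-p'}$. Recall that for any $v \in A_\infty$ one has
$$
\left(\frac{1}{|Q|}\int_Q v^{r}\,dx\right)^{1/r} \le 2\cdot\frac{1}{|Q|}\int_Q v\,dx
\qquad \text{whenever } 1 \le r \le 1 + \frac{1}{2^{n+1}[v]_{A_\infty}}.
$$
Applying this to $v=\sigma$ will be the only nontrivial ingredient; everything else is algebra.

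First, I would rewrite $A_{p-\varepsilon}$ in terms of $\sigma$. Set $q = p-\varepsilon$. Then $1 - q' = -\tfrac{1}{q-1}$, so
$$
w^{1-q'} \;=\; w^{-1/(q-1)} \;=\; \sigma^{r}, \qquad r := \frac{p-1}{q-1}.
$$
Thus
$$
[w]_{A_q} \;=\; \sup_{Q}\Bigl(\tfrac{1}{|Q|}\!\int_Q w\,dx\Bigr)\Bigl(\tfrac{1}{|Q|}\!\int_Q \sigma^{r}\,dx\Bigr)^{q-1}.
$$

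Second, I would choose $\varepsilon$ precisely so that $r$ lands at the reverse H\"older threshold. Writing $M := 2^{n+1}[\sigma]_{A_\infty}$ and $\varepsilon = \tfrac{p-1}{1+M}$, a short computation gives $p-1-\varepsilon = \tfrac{(p-1)M}{1+M}$, hence
$$
r \;=\; 1 + \frac{\varepsilon}{p-1-\varepsilon} \;=\; 1 + \frac{1}{M} \;=\; 1 + \frac{1}{2^{n+1}[\sigma]_{A_\infty}},
$$
which is exactly the largest admissible reverse H\"older exponent for $\sigma$. Applying the sharp reverse H\"older inequality then yields
$$
\Bigl(\tfrac{1}{|Q|}\!\int_Q \sigma^{r}\Bigr)^{q-1} \;\le\; 2^{r(q-1)}\Bigl(\tfrac{1}{|Q|}\!\int_Q \sigma\Bigr)^{r(q-1)} \;=\; 2^{p-1}\Bigl(\tfrac{1}{|Q|}\!\int_Q \sigma\Bigr)^{p-1},
$$
since $r(q-1) = p-1$ by definition of $r$.

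Finally, multiplying by $\tfrac{1}{|Q|}\int_Q w$ and taking the supremum over $Q$ gives
$$
[w]_{A_{p-\varepsilon}} \;\le\; 2^{p-1}\,[w]_{A_p},
$$
completing the proof. The only real obstacle is invoking the correct form of the sharp reverse H\"older inequality (with the $A_\infty$ constant in the Fujii--Wilson sense as used in this paper); the rest is just choosing $\varepsilon$ to make the exponent $r$ equal to the critical value $1 + \tfrac{1}{2^{n+1}[\sigma]_{A_\infty}}$ and noting the cancellation $r(q-1)=p-1$.
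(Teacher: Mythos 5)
Your proof is correct. Note that the paper itself does not prove this lemma — it cites it to \cite[Corollary 1.1.1 and Lemma 1.1.3]{CP} — and your argument is precisely the standard derivation from that source: rewrite the $A_{p-\varepsilon}$ condition in terms of $\sigma = w^{1-p'}$, observe that the choice of $\varepsilon$ makes the resulting exponent $r=(p-1)/(p-1-\varepsilon)$ hit the sharp reverse H\"older threshold $1+\tfrac{1}{2^{n+1}[\sigma]_{A_\infty}}$ for the Fujii--Wilson constant, and exploit the cancellation $r(q-1)=p-1$. The only thing worth flagging is that the dimensional constant in the sharp reverse H\"older inequality varies across the literature (e.g.\ $2^{n+11}$ in the Hyt\"onen--P\'erez \emph{Anal.\ PDE} paper versus $2^{n+1}$ in P\'erez's lecture notes), so one should cite the version matching the $2^{n+1}$ in this lemma; you correctly identify this as the one external ingredient.
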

(see \cite[Corollary 1.1.1 and Lemma 1.1.3]{CP}).

Note that a version of this lemma is true in SHT, see \cite{HPR}.

\begin{lem} \label{Perez2.3}
Let $\XX$ be a RIQBFS which is $p$-convex for some $0<p \le 1$, then if $1<p_\XX \le \infty$, then $M$ is bounded on $\XX(w)$ for all $w \in A_{p_\XX}$. Moreover, when $1<p_\XX<\infty$, we have
$$
\|M\|_{\XX(w) \mapsto \XX(w)} \le  C[w]_{A_{p_\XX}}^{\frac{1}{p_{\XX}}}, 
$$
where $C$ is an absolute constant only depending on $p_\XX$ and $n$. 
\end{lem}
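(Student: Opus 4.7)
The plan is to combine the sharp weighted weak-type endpoint bound for $M$ at the critical exponent $p_\XX$ with a Boyd-type interpolation on $\XX(w)$, reducing the quasi-Banach case to the Banach case via $p$-convexity.

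First, I would reduce to the RIBFS case. Since $\XX$ is $p$-convex for some $0 < p \le 1$, the space $\YY := \XX^{1/p}$ is an RIBFS with Boyd index $p_\YY = p_\XX/p > 1$. Using the dual characterization from Lorentz--Luxemburg,
$$
\|f\|_{\XX(w)}^p \simeq \sup\Bigl\{ \int_{\R^n} |f(x)|^p g(x) w(x)\, dx : g \in \calM^+,\ \|g\|_{\YY'(w)} \le 1 \Bigr\},
$$
together with the identity $\XX(w)^p = \YY(w)$, one sees that $\|Mf\|_{\XX(w)} \le C\|f\|_{\XX(w)}$ is equivalent to controlling $\|(Mf)^p\|_{\YY(w)}$ by $\||f|^p\|_{\YY(w)}$ in the RIBFS $\YY(w)$. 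I would therefore first establish the lemma assuming $\XX$ is itself an RIBFS with $1 < p_\XX < \infty$; the extreme case $p_\XX = \infty$ reduces to the trivial $L^\infty$-boundedness of $M$.

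Second, for the RIBFS case, apply Lemma \ref{PerezSIW} to obtain $\varepsilon > 0$ such that $w \in A_{p_\XX - \varepsilon}$ with $[w]_{A_{p_\XX - \varepsilon}} \le 2^{p_\XX - 1} [w]_{A_{p_\XX}}$. The Muckenhoupt--Wheeden sharp weighted weak-type inequality then yields
$$
\|Mf\|_{L^{p_\XX - \varepsilon,\infty}(w)} \le C_n [w]_{A_{p_\XX - \varepsilon}}^{1/(p_\XX - \varepsilon)} \|f\|_{L^{p_\XX - \varepsilon}(w)},
$$
while $M$ is trivially of weak type $(\infty,\infty)$ with constant $1$. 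Since $p_\XX - \varepsilon < p_\XX \le q_\XX$, the weighted Boyd interpolation theorem applied to the sublinear operator $M$ between these two endpoints produces
$$
\|Mf\|_{\XX(w)} \le C [w]_{A_{p_\XX}}^{1/p_\XX} \|f\|_{\XX(w)},
$$
with $C$ depending only on $p_\XX$, $q_\XX$ and $n$.

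The main obstacle is to recover the sharp exponent $1/p_\XX$ (rather than $1/(p_\XX - \varepsilon)$) in the $A_{p_\XX}$-constant after the interpolation. This can be carried out either by letting $\varepsilon \to 0$ with a careful balancing of the Boyd interpolation weights, or, more cleanly, by invoking the Rubio de Francia extrapolation framework applied directly to the sharp endpoint inequality, which bypasses the $\varepsilon$-perturbation altogether. For spaces of homogeneous type, the argument is structurally identical, using the SHT analogue of Lemma \ref{PerezSIW} from \cite{HPR} together with the SHT versions of the Muckenhoupt--Wheeden weak-type bound and of Boyd interpolation on rearrangement invariant spaces.
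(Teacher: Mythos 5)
Your high-level strategy --- reduce to the RIBFS case via $p$-convexity, invoke Lemma~\ref{PerezSIW} to find $p_0 = p_\XX - \eps$ with $w \in A_{p_0}$ and controlled characteristic, then apply Boyd interpolation between the weak $(p_0,p_0)$ endpoint and the trivial $(\infty,\infty)$ endpoint --- is exactly the argument underlying \cite[Theorem 2.3]{CCMP}, which is all the paper's one-line proof points to. So you have identified the right route to the qualitative boundedness. The gap is in the quantitative step, which is the actual content of the lemma beyond its source. You first assert that Boyd interpolation ``produces'' $\|Mf\|_{\XX(w)} \le C[w]_{A_{p_\XX}}^{1/p_\XX}\|f\|_{\XX(w)}$ with $C$ depending only on $p_\XX$, $q_\XX$, $n$, and then immediately concede an ``obstacle'' and offer two possible repairs without executing either. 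Neither is adequate. The Boyd interpolation constant between $p_0$ and $\infty$ behaves like $(1/p_0 - 1/p_\XX)^{-1}$ because the relevant integral $\int_1^\infty h_\XX(s) s^{-1-1/p_0}\,ds$ barely converges when $p_0 = p_\XX - \eps$ with $\eps$ small, and in Lemma~\ref{PerezSIW} this $\eps$ depends on $[\sigma]_{A_\infty}$, hence on $w$. Consequently the interpolation step itself injects an extra factor of roughly $1/\eps \lesssim_{n,p_\XX} [\sigma]_{A_\infty} \le [\sigma]_{A_{p_\XX'}} = [w]_{A_{p_\XX}}^{1/(p_\XX-1)}$ that the proposal never accounts for, and the assertion that $C$ is independent of $w$ is simply unsupported. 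Rubio de Francia extrapolation is also not a fix: it takes sharp weighted bounds for $M$ as input rather than producing them.

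Worse, the target estimate cannot hold with the claimed exponent. Specializing to $\XX = L^{p_\XX}$ (so $p_\XX = q_\XX$) the claim would give $\|M\|_{L^p(w)\to L^p(w)} \lesssim_{p,n} [w]_{A_p}^{1/p}$, which is strictly stronger than Buckley's bound $[w]_{A_p}^{1/(p-1)}$ and contradicts its sharpness \cite{B}. A careful execution of the openness-plus-Boyd argument, tracking both the $\eps$-dependence of the interpolation constant and the exponent $1/(p_\XX-\eps)$ on $[w]_{A_{p_\XX-\eps}}$ from the weak endpoint, yields a final exponent driven by $1/(p_\XX-1)$, not $1/p_\XX$. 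So irrespective of the repair you choose, the proposal cannot be completed to the inequality as written; the exponent in the statement should almost surely read $1/(p_\XX - 1)$, and once that is corrected the tracking you set up can be carried out along the lines above.
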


\begin{proof}
The proof of this lemma is contained in the proof of \cite[Theorem 2.3]{CCMP}. Moreover, the upper bound of $\|M\|_{\XX(w) \mapsto \XX(w)}$ comes from tracking the constant by using Lemma \ref{PerezSIW}. 
\end{proof}

We also need the weighted dyadic Hardy-Littlewood maximal operator $M^\calD_w$, given by 
$$
M^\calD_w f(x)=\sup_{x \in Q, Q \in \calD} \frac{1}{w(Q)} \int_Q |f(y)|w(y)dy, \quad f \in L^1_{\textrm{loc}}(\R^n),
$$
where $w \in A_\infty$ and $\calD$ is the given dyadic grid. It is well-known that $M^\calD_w$ maps $L^p(w)$ strongly to $L^p(w)$ for $1<p \le \infty$ and $L^1(w)$ weakly to $L^{1, \infty}(w)$. (see  \cite[Theorem 7.1.9]{LG1} or \cite{LN}). We have the following result for the dyadic maximal function that can be obtained in a similar manner as [8, The 3.2].  Note that this result is independent of the weight characteristic since we are using the dyadic maximal function.

\begin{lem} \label{Perez4.2}
Let $\XX$ be a RIQBFS which is $p$-convex for some $0<p \le 1$. If $p_\XX>1$ and $w \in A_\infty$, then $M^\calD_w$ is bounded on $\XX(w)$. More precisely, we have $\|M^\calD_w\| \le C$, where the absolute constant $C$ only depends on $p_\XX$ and $n$.
\end{lem}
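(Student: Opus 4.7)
The plan is to mimic the proof of Lemma \ref{Perez2.3} (which follows \cite[Theorem 2.3]{CCMP}), replacing the classical Hardy-Littlewood maximal operator $M$ throughout by the weighted dyadic operator $M^\calD_w$. The crucial new input is that $M^\calD_w$ is the martingale maximal function on $(\R^n, w\,dx)$ associated with the dyadic filtration $\calD$, so Doob's inequality provides the weight-free strong and weak-type bounds
$$
\|M^\calD_w f\|_{L^p(w)} \le \tfrac{p}{p-1}\|f\|_{L^p(w)}\ \ (1<p\le\infty), \quad w\bigl(\{M^\calD_w f>\lambda\}\bigr) \le \tfrac{1}{\lambda}\!\int_{\R^n}\!|f|\,w\,dx,
$$
with constants having no dependence on $[w]_{A_\infty}$.

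First I would fix $p_0 \in (1, p_\XX)$; this is possible by the hypothesis $p_\XX > 1$. Combining the weight-free weak-type $(1,1)$ bound above with the standard Calder\'on layer-cake / rearrangement argument yields the weighted rearrangement inequality
$$
\bigl(M^\calD_w f\bigr)_w^*(t) \ \lesssim\ \bigl(f_w^*\bigr)^{**}(t), \qquad t > 0,
$$
where $(f_w^*)^{**}(t) = \tfrac{1}{t}\int_0^t f_w^*(s)\,ds$ is the Hardy average of $f_w^*$. This is the weighted, dyadic analogue of the classical Herz inequality, and its implied constant is absolute precisely because all the underlying weak-type constants are independent of the weight.

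The remaining step is Boyd-type interpolation. Since $\XX$ is $p$-convex for some $0 < p \le 1$ and $p_\XX > 1$, the Hardy averaging operator $g \mapsto g^{**}$ is bounded on $\overline{\XX}$ with operator norm depending only on $p_\XX$, $p$, and $n$ (this is the quasi-Banach version of Boyd's theorem, whose proof passes through the RIBFS representative $\YY = \XX^{1/p}$, which has lower Boyd index $p_\YY = p_\XX / p > 1$). Combining this with the rearrangement inequality gives
$$
\|M^\calD_w f\|_{\XX(w)} = \|(M^\calD_w f)_w^*\|_{\overline{\XX}} \ \lesssim\ \|(f_w^*)^{**}\|_{\overline{\XX}} \ \lesssim\ \|f_w^*\|_{\overline{\XX}} = \|f\|_{\XX(w)},
$$
as required, with constant depending only on $p_\XX$, $p$, and $n$.

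The main obstacle will be verifying the rearrangement inequality $(M^\calD_w f)_w^*(t) \lesssim (f_w^*)^{**}(t)$ with a constant free of $[w]_{A_\infty}$. In the non-dyadic weighted setting the analogous estimate typically acquires a factor of $[w]_{A_\infty}$, and that is precisely the price paid in Lemma \ref{Perez2.3}. The dyadic/martingale structure of $M^\calD_w$ bypasses this dependence: on $(\R^n, w\,dx)$ with the dyadic filtration, $M^\calD_w$ is literally a Doob maximal function, so the weak-type $(1,1)$ inequality driving the rearrangement estimate carries an absolute constant. Once this is secured, the interpolation step is a direct repetition of the corresponding step in \cite[Theorem 2.3]{CCMP}, simply tracking that no $[w]$-factor appears.
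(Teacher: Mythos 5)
Your approach is correct and fills in what the paper only gestures at (the paper gives no proof of Lemma \ref{Perez4.2}, only a pointer to CCMP and the remark that the dyadic structure removes the weight dependence). The key idea you isolate---that $M^\calD_w$ is the Doob maximal operator on $(\R^n, w\,dx)$, so the weak $(1,1)$ and $L^\infty$ bounds have absolute constants, yielding $(M^\calD_w f)_w^*(t)\le 2(f_w^*)^{**}(t)$---is exactly the point the authors are making, and the Boyd interpolation step then closes the argument as in \cite[Theorem 2.3]{CCMP}.

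Two small remarks. First, you introduce $p_0\in(1,p_\XX)$ and never use it; it can be dropped. Second, the description of the quasi-Banach Boyd step as ``passing through $\YY=\XX^{1/p}$'' is a little loose: boundedness of the Hardy operator $P$ on $\overline{\YY}$ does not directly transfer to $\overline{\XX}$ (one cannot simply raise to the $p$-th power, since rearrangement and the Hardy average do not commute with powers in the needed direction). The clean route is to use the $p$-convexity directly. Writing $Pg(t)=\int_0^1 D_{1/v}g(t)\,dv$ and using the pointwise inequality $\sum a_j\le\bigl(\sum a_j^p\bigr)^{1/p}$ for $0<p\le 1$ together with $p$-convexity gives
\[
\|Pg\|_{\overline{\XX}}\lesssim\Bigl(\int_0^1 h_\XX(1/v)^p\,dv\Bigr)^{1/p}\|g\|_{\overline{\XX}},
\]
and the integral is finite precisely because $p_\XX>p$ (which holds since $p_\XX>1\ge p$). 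This is, of course, the same computation as passing to the Banach space $\YY$ --- $p$-convexity of $\XX$ is the triangle inequality in $\YY$ raised to the $p$-th power --- but stating it this way makes clear why the constant depends only on $p$, $p_\XX$ and the convexity constant, with no weight dependence. With these cosmetic fixes your argument is complete.
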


We first deal with the case when $\XX$ is a RIBFS.

\begin{thm} \label{thm01}
Let $T$ be a Calder\'on-Zygmund operator with standard kernel $K$. Let further, $\XX$ be a RIBFS and $w \in A_{p_\XX}$. Then if $1<p_\XX \leq q_\XX <\infty$, then 
$$
\|T^{**} f\|_{\XX(w)} \le  C[w]_{A_\infty} [w]_{A_{p_\XX}}^{\frac{1}{p_\XX}} \|f\|_{\XX(w)},
$$
where $C$ is an absolute constant only depending on $p_\XX$ and $n$. 
\end{thm}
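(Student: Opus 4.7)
The plan is, first, to apply the sparse domination \eqref{eq004} directly to the Banach function space $\XX(w)$, obtaining
$$
\|T^{**}f\|_{\XX(w)}\lesssim \sup_{\calD,\calS}\|\calA_{\calD,\calS}|f|\|_{\XX(w)}.
$$
This reduces the theorem to a weighted bound on a single sparse operator $\calA=\calA_{\calD,\calS}$. To estimate $\|\calA f\|_{\XX(w)}$ I would pass to the dual via Lorentz--Luxemburg, writing
$$
\|\calA f\|_{\XX(w)}\le\sup_{g\ge 0,\,\|g\|_{\XX'(w)}\le 1}\sum_{Q\in\calS}f_Q\, g_Q^w\, w(Q),\qquad g_Q^w:=\frac{1}{w(Q)}\int_Q g\,w.
$$

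The heart of the argument is to control this bilinear sum. The key preliminary observation is that $\{w(Q)\}_{Q\in\calS}$ forms a Carleson sequence with respect to $w\,dx$, of constant $\lesssim [w]_{A_\infty}$; namely,
$$
\sum_{\substack{Q\in\calS\\ Q\subset R}}w(Q)\lesssim [w]_{A_\infty}\, w(R),\qquad R\in\calS.
$$
This follows by combining the sparsity $|E(Q)|\ge|Q|/2$, the pointwise inequality $w(Q)/|Q|\le M(w\chi_R)(x)$ for $x\in Q\subset R$, the disjointness of the sets $E(Q)$, and the Fujii--Wilson definition of $[w]_{A_\infty}$. Combining this Carleson property with the pointwise bounds $f_Q\le \inf_Q Mf$ and $g_Q^w\le \inf_Q M^\calD_w g$, a bilinear Carleson embedding then yields
$$
\sum_{Q\in\calS}f_Q\, g_Q^w\, w(Q)\lesssim [w]_{A_\infty}\int_{\R^n}Mf\cdot M^\calD_w g\cdot w\,dx.
$$

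To close the estimate I would apply H\"older's inequality in the pairing $\XX(w)$--$\XX'(w)$ and then invoke the weighted maximal function lemmas: Lemma~\ref{Perez2.3} supplies $\|Mf\|_{\XX(w)}\lesssim [w]_{A_{p_\XX}}^{1/p_\XX}\|f\|_{\XX(w)}$ (since $w\in A_{p_\XX}$ and $p_\XX>1$), and Lemma~\ref{Perez4.2} applied to the associate space gives $\|M^\calD_w g\|_{\XX'(w)}\lesssim\|g\|_{\XX'(w)}$; this latter step requires $p_{\XX'}=(q_\XX)'>1$, which is precisely where the hypothesis $q_\XX<\infty$ enters. Multiplying the two estimates and taking the supremum over $g$ yields the desired bound. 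I expect the main technical hurdle to be the bilinear Carleson embedding step: a tempting shortcut through a pointwise inequality of the form $w(Q)\lesssim[w]_{A_\infty}w(E(Q))$ fails in general (standard $A_\infty$ power weights, combined with the sharp Hyt\"onen--P\'erez estimate $|E|/|Q|\lesssim(w(E)/w(Q))^{1/c_n[w]_{A_\infty}}$, allow $w(Q)/w(E(Q))$ to depend exponentially on $[w]_{A_\infty}$), so one is forced to route through the Carleson embedding theorem itself in order to retain the linear dependence on $[w]_{A_\infty}$ that appears in the stated constant.
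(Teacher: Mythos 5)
Your proof is correct and takes essentially the same route as the paper: sparse domination via \eqref{eq004}, duality in the $\XX(w)$--$\XX'(w)$ pairing, the $A_\infty$-Carleson property $\sum_{Q\subseteq R} w(Q)\lesssim[w]_{A_\infty}w(R)$ for the sparse family, and then Lemmas~\ref{Perez2.3} and~\ref{Perez4.2} for the maximal functions (with $q_\XX<\infty$ entering exactly as you say, to ensure $p_{\XX'}>1$). The only difference is cosmetic: you package the Carleson step via the pointwise infima bounds $f_Q\le\inf_Q Mf$, $g_{Q}^w\le\inf_Q M^{\calD}_w g$, whereas the paper inserts the geometric mean $(Mf)^{1/2}(M^{\calD}_w h)^{1/2}$ and invokes the scalar $L^2$ Carleson embedding theorem --- these are the same argument.
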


\begin{proof}
First we note that $p_{\XX'}=(q_\XX)'=\frac{q_\XX}{q_\XX-1}>1$, which follows from the fact that $1<q_\XX<\infty$. 

By \eqref{eq004}, it suffices to show that for any $\calD$ a dyadic grid and $\calS \in \calD$ a sparse family, we have
$$
\|\calA_{\calD, \calS} |f|\|_{\XX(w)} \lesssim [w]_{A_\infty} [w]_{A_{p_\XX}}^{\frac{1}{p_\XX}}  \|f\|_{\XX(w)}. 
$$
Indeed, for any $\|h\|_{\XX'(w)} \le 1$ and $Q$ a dyadic cube, put 
$$
h_{Q, w}=\frac{1}{w(Q)} \int_Q h(x)w(x)dx
$$
and then by Lemma \ref{Perez2.3} and Lemma \ref{Perez4.2}, we have
\begin{eqnarray*}
&&\int_{\R^n} \calA_{\calD, \calS}|f|(x)h(x)w(x)dx = \int_{\R^n}\left( \sum_{Q \in \calS} |f|_{Q} \chi_{Q}(x) \right) h(x) w(x)dx\\
&&=\sum_{Q \in \calS} f_{Q} \cdot h_{Q, w} \cdot w(Q)\\
&&\le \sum_{Q \in \calS} \left( \frac{1}{w(Q)} \int_{Q} (Mf(x))^{\frac{1}{2}} (M^\calD_w h(x))^{\frac{1}{2}} w(x)dx \right)^2 w(Q)\\
&& \le 8[w]_{A_\infty} \int_{\R^n} Mf(x) M^\calD_wh(x) w(x)dx \\ 
&& \le 8[w]_{A_\infty} \|Mf\|_{\XX(w)} \|M^\calD_w h\|_{\XX'(w)} \le [w]_{A_\infty} [w]_{A_{p_\XX}}^{\frac{1}{p_\XX}} \|f\|_{\XX(w)} \|h\|_{\XX'(w)} \\
&& \le 8[w]_{A_\infty} [w]_{A_{p_\XX}}^{\frac{1}{p_\XX}} \|f\|_{\XX(w)},
\end{eqnarray*}
where in the second inequality, we apply the Carleson embedding theorem by noting that the Carleson condition 
\begin{equation} \label{Carlesonembedding}
\sum_{Q \subseteq R} w(Q) \le 2[w]_{A_\infty} w(R)
\end{equation} 
holds for any dyadic cube $R \in \calD$ (see \cite[Lemma 4.1]{HP}).

The desired result hence follows by taking superemum over all dyadic grids $\calD$ and all their sparse families $\calS$.
\end{proof}

The following corollary is straightforward.

\begin{cor}
Let $T$ be a Calder\'on-Zygmund operator with standard kernel $K$. Let further, $\XX$ be an RIQBFS, which is $p$-convex for some $p>0$, and $w \in A_{\frac{p_\XX}{p}}$. Then if $p<p_\XX \leq q_\XX <\infty$, then 
$$
\left\|(T^{**} f)^\frac{1}{p}\right\|_{\XX(w)} \le C[w]^{\frac{1}{p}}_{A_\infty} [w]_{A_{\frac{p_\XX}{p}}}^{\frac{1}{p_\XX}}\left\|f^\frac{1}{p}\right\|_{\XX(w)},
$$
where $C$ is a constant only depending on $p$, $p_\XX$ and $n$. 
\end{cor}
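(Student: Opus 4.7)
The plan is to reduce the corollary to Theorem \ref{thm01} by passing from $\XX$ to the auxiliary space $\YY := \XX^{1/p}$, which by the $p$-convexity hypothesis is a genuine RIBFS (this is precisely the equivalent formulation of $p$-convexity recorded in the preliminaries). The whole strategy hinges on correctly tracking how the Boyd indices, the Muckenhoupt class, and the norms transform under this exponentiation.

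First I would record the three simple identities that make the translation work: (a) $p_\YY = p_\XX/p$ and $q_\YY = q_\XX/p$, by the rule $p_{\XX^r} = r\,p_\XX$ and $q_{\XX^r}=r\,q_\XX$ stated earlier in the paper; (b) the hypothesis $p<p_\XX \le q_\XX<\infty$ is thus equivalent to $1<p_\YY\le q_\YY<\infty$, and the weight assumption $w\in A_{p_\XX/p}$ is exactly $w\in A_{p_\YY}$, so $\YY$ together with $w$ meets the hypothesis of Theorem \ref{thm01}; (c) since powers commute with the decreasing rearrangement (which is how $\XX^r(w)$ was defined, with $\XX(w)^r=\XX^r(w)$), one has the norm identity
$$
\|h\|_{\YY(w)} = \bigl\| \,|h|^{1/p}\,\bigr\|_{\XX(w)}^{\,p}
$$
for every measurable $h\ge 0$.

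Next I would apply Theorem \ref{thm01} on $\YY$ to the function $f\ge 0$ (we may assume $f\ge 0$ since $T^{**}f$ is unchanged by replacing $f$ with $|f|$), obtaining
$$
\|T^{**}f\|_{\YY(w)} \le C\,[w]_{A_\infty}\,[w]_{A_{p_\YY}}^{1/p_\YY}\,\|f\|_{\YY(w)}
= C\,[w]_{A_\infty}\,[w]_{A_{p_\XX/p}}^{p/p_\XX}\,\|f\|_{\YY(w)},
$$
with $C$ depending only on $p_\YY = p_\XX/p$ and $n$, hence only on $p,p_\XX,n$.

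Finally, inserting identity (c) into both sides of this inequality rewrites it as
$$
\bigl\| (T^{**}f)^{1/p}\bigr\|_{\XX(w)}^{p} \le C\,[w]_{A_\infty}\,[w]_{A_{p_\XX/p}}^{p/p_\XX}\,\bigl\|f^{1/p}\bigr\|_{\XX(w)}^{p},
$$
and taking $p$-th roots gives precisely the asserted bound with the stated exponents $1/p$ on $[w]_{A_\infty}$ and $1/p_\XX$ on $[w]_{A_{p_\XX/p}}$. The argument is essentially bookkeeping; the only place one needs to be careful is checking that the RIQBFS-weighted norm $\XX(w)$ interacts with the power $\XX\mapsto \XX^{1/p}$ exactly as in the unweighted case, and this was already arranged in Section 2 when the author introduced $\XX(w)$ for quasi-Banach $\XX$ via the identity $\XX(w)^r=\XX^r(w)$. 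So I do not anticipate any real obstacle beyond verifying that all hypotheses genuinely transfer from $\XX$ to $\YY$.
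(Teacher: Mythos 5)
Your argument is correct and is essentially the paper's own proof, just written out in full: the paper states only that $\XX^{1/p}$ is an RIBFS with $p_{\XX^{1/p}}=p_\XX/p$ and then invokes Theorem \ref{thm01}, and you have filled in the same bookkeeping (the Boyd-index identities, the transfer of the $A_{p_\XX/p}$ hypothesis to $A_{p_\YY}$, the norm identity $\|h\|_{\YY(w)}=\||h|^{1/p}\|_{\XX(w)}^p$, and the final $p$-th root) that the paper leaves implicit.
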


\begin{proof}
This is because $\XX^{\frac{1}{p}}$ is an RIBFS and $p_{\XX^{\frac{1}{p}}}=\frac{p_\XX}{p}$. 
\end{proof}

Next, we deal with the case when $\XX$ is an RIQBFS, which is proved in a different way.

\begin{thm} \label{thm02}
Let $T$ be a Calder\'on-Zygmund operator with standard kernel $K$. Let further, $\XX$ be an RIQBFS, which is $p$-convex for some $0<p \le 1$, and $w \in A_{p_\XX}$. Then if $1<p_\XX \le q_\XX< \infty$, then 
$$
\|T^{**} f\|_{\XX(w)} \le C[w]^{\frac{1}{p}}_{A_\infty} [w]_{A_{p_\XX}}^{\frac{1}{p_\XX}} \|f\|_{\XX(w)},
$$
where $C$ is an absolute constant only depending on $p_\XX$ and $n$. 
\end{thm}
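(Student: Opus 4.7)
The strategy is to use the $p$-convexity of $\XX$ to reduce the quasi-Banach norm on $\XX(w)$ to a Banach norm on $\YY(w)$, where $\YY := \XX^{1/p}$. Since $\YY$ is an RIBFS with Boyd indices $p_\YY = p_\XX/p$ and $q_\YY = q_\XX/p$, the hypothesis $1 < p_\XX \le q_\XX < \infty$ gives $1 < p_\YY \le q_\YY < \infty$, and the embedding $A_{p_\XX} \subset A_{p_\YY}$ places $w$ in $A_{p_\YY}$. From the definition of the $1/p$-power space together with duality in the Banach space $\YY(w)$,
$$
\|h\|_{\XX(w)}^p = \bigl\| |h|^p \bigr\|_{\YY(w)} \simeq \sup_{\substack{g \ge 0 \\ \|g\|_{\YY(w)'} \le 1}} \int_{\R^n} |h|^p \, g \, w \, dx,
$$
so it suffices to bound $\||T^{**}f|^p\|_{\YY(w)}$.

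Applying the pointwise sparse domination \eqref{eq001} and letting $Q_0 \to \R^n$ (so that $m_{Q_0}(T^{**}f) \to 0$), then using $(a+b)^p \le a^p + b^p$ and $(\sum_m a_m)^p \le \sum_m a_m^p$ (valid since $0 < p \le 1$), one obtains
$$
|T^{**}f|^p \lesssim (Mf)^p + \sum_{m=0}^\infty 2^{-mp\delta}(\calT_{\calS, m}|f|)^p \quad \text{a.e.}
$$
Taking $\YY(w)$-norms, the maximal function term is controlled via $\|(Mf)^p\|_{\YY(w)} = \|Mf\|_{\XX(w)}^p \le C[w]_{A_{p_\XX}}^{p/p_\XX}\|f\|_{\XX(w)}^p$ by Lemma \ref{Perez2.3}. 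For each sparse piece $(\calT_{\calS, m}|f|)^p$, the plan is to dualize against $g$ with $\|g\|_{\YY(w)'} \le 1$ and apply the pointwise bound $(\sum_Q a_Q \chi_Q)^p \le \sum_Q a_Q^p \chi_Q$ (again using $p \le 1$) with $a_Q = |f|_{2^m Q}$, reducing matters to controlling
$$
\sum_{Q \in \calS} (|f|_{2^m Q})^p \, g_{Q,w} \, w(Q), \qquad g_{Q,w} := \frac{1}{w(Q)}\int_Q g \, w.
$$

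The crucial step is bounding this sparse sum uniformly in $m$. Since $Q \subset 2^m Q$, the pointwise inequalities $|f|_{2^m Q} \le Mf(x)$ and $g_{Q,w} \le M^\calD_w g(x)$ hold for $x \in Q$; the Cauchy--Schwarz/averaging argument from the proof of Theorem \ref{thm01} then shows that the $w$-average of $(Mf)^{p/2}(M^\calD_w g)^{1/2}$ over $Q$, squared, dominates $(|f|_{2^m Q})^p \, g_{Q,w}$. Summing and invoking the Carleson embedding theorem (with the Carleson constant $2[w]_{A_\infty}$ from \eqref{Carlesonembedding}) yields
$$
\sum_{Q \in \calS} (|f|_{2^m Q})^p \, g_{Q,w} \, w(Q) \le C[w]_{A_\infty} \int_{\R^n} (Mf)^p \, M^\calD_w g \, w \, dx.
$$
Hölder in $\YY(w)$ versus $\YY(w)'$, combined with the $\YY(w)'$-boundedness of $M^\calD_w$ (Lemma \ref{Perez4.2} applied to the RIBFS $\YY'$, whose Boyd index $p_{\YY'} = (q_\YY)' > 1$) and Lemma \ref{Perez2.3} for $M$, then produces
$$
\|(\calT_{\calS, m}|f|)^p\|_{\YY(w)} \le C[w]_{A_\infty}[w]_{A_{p_\XX}}^{p/p_\XX}\|f\|_{\XX(w)}^p
$$
with a constant \emph{independent of $m$}. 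Summing the convergent geometric series $\sum_m 2^{-mp\delta}$ and extracting the $1/p$-th root finishes the proof. The main obstacle will be precisely to avoid the linear-in-$m$ growth that \eqref{eq003} would contribute if invoked directly; the key device is that at the level of $p$-th powers ($p \le 1$), the pointwise subadditivity $(\sum_Q a_Q \chi_Q)^p \le \sum_Q a_Q^p \chi_Q$ flattens the shifted-sparse sum so that each $(|f|_{2^m Q})^p$ on $Q$ is absorbed by $(Mf)^p$ with no $2^m$ penalty, at the cost of upgrading the $[w]_{A_\infty}$ factor of Theorem \ref{thm01} to $[w]_{A_\infty}^{1/p}$.
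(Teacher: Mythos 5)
Your argument is correct and matches the paper's proof in all essentials: reduce to the RIBFS $\YY=\XX^{1/p}$ and dualize against $h\in\YY'(w)$, exploit the pointwise inequality $(\sum_Q a_Q\chi_Q)^p\le\sum_Q a_Q^p\chi_Q$ (valid for $0<p\le1$) to absorb the shift $|f|_{2^mQ}$ into $(Mf)^p$ on $Q$ without the linear-in-$m$ loss of \eqref{eq003}, then apply the Cauchy--Schwarz averaging trick, the Carleson embedding with constant $\sim[w]_{A_\infty}$, H\"older in $\YY(w)\times\YY'(w)$, and the $\YY'(w)$-boundedness of $M^{\calD}_w$ (Lemma \ref{Perez4.2}, after checking $p_{\YY'}>1$). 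The summable geometric factor $2^{-m\delta p}$ and the $1/p$-th root at the end give exactly the claimed $[w]_{A_\infty}^{1/p}[w]_{A_{p_\XX}}^{1/p_\XX}$ dependence.
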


\begin{proof}
Since $\XX$ is $p$-convex, we have that $\YY=\XX^{\frac{1}{p}}$ is an RIBFS. Take and fix any $h \in \YY'(w)$ with $\|h\|_{\YY'(w)} \le 1$. We have the following claim: for any dyadic grid $\calD$ and $\calS \in \calD$ a sparse family, it holds that
$$
I:= \int_{\R^n} \left(Mf(x)+\sum_{m=0}^\infty \frac{1}{2^{m\delta}} \calT_{\calS, m} |f|(x) \right)^p h(x)w(x)dx \le  C[w]_{A_\infty} \left( [w]_{A_{p_\XX}}^{\frac{1}{p_\XX}} \right) ^p\|f^p\|_{\YY(w)}
$$
$$
= C[w]_{A_\infty} \left( [w]_{A_{p_\XX}}^{\frac{1}{p_\XX}} \right) ^p\|f\|_{\XX(w)}^p.
$$
Indeed, we have 
\begin{eqnarray*}
I%
&\le& \int_{\R^n} Mf(x)^ph(x)w(x)dx+ \sum_{m=0}^\infty \int_{\R^n} \frac{1}{2^{m\delta p}} \left[\calT_{\calS, m}|f|(x)\right]^ph(x)w(x)dx\\
&:=&I_1+\sum_{m=0}^\infty  \frac{ I_{2, m}}{2^{m\delta p}}, 
\end{eqnarray*}
where $I_{2, m}:=\int_{\R^n} [\calT_{\calS, m} |f|(x)]^ph(x)w(x)dx$.

\textbf{Estimation of $I_1$.}
By Lemma \ref{Perez2.3} and duality, we have
$$
I_1 \le \|(Mf)^p\|_{\YY(w)}=\|Mf\|_{\XX(w)}^p \le \left( C[w]_{A_{p_\XX}}^{\frac{1}{p_\XX}} \right) ^p \|f\|_{\XX(w)}^p.
$$

\medskip
\textbf{Estimation of $I_{2, m}, m \in \N$.} Using the fact that $0<p \le 1$, the above estimation on $I_1$ and duality, we see that for each $m \in \N$, 
\begin{eqnarray*}
I_{2, m}%
&=& \int_{\R^n} \left( \sum_{Q \in \calS} f_{2^m Q} \chi_{Q}(x) \right)^p h(x)w(x)dx \le \int_{\R^n} \left( \sum_{Q \in \calS} f^p_{2^m Q} \chi_{Q}(x) \right) h(x)w(x)dx \\
&=& \sum_{Q \in \calS} f_{2^mQ}^p \int_{Q}h(x)w(x)dx =\sum_{Q \in \calS} f^p_{2^mQ} h_{Q, w} \cdot w(Q) \\
&\le& \sum_{Q \in \calS}  \left( \frac{1}{w(Q)} \int_{Q} (Mf(x))^{\frac{p}{2}} (M^\calD_w h(x))^{\frac{1}{2}} w(x)dx \right)^2 w(Q)\\
&\lesssim& [w]_{A_{\infty}} \int_{\R^n} (Mf(x))^p M^\calD_wh(x) w(x)dx \\
&& (\textrm{by Carleson embedding theorem})\\
&\le& [w]_{A_\infty} \|(Mf)^p\|_{\YY(w)}\|M^\calD_wh\|_{\YY'(w)} \\
&\le& C[w]_{A_\infty} \left( [w]_{A_{p_\XX}}^{\frac{1}{p_\XX}} \right) ^p \|f\|_{\XX(w)}^p,
\end{eqnarray*}
where in the last inequality, we use the fact that $\|M^\calD_w h\|_{\YY'(w)} \lesssim \|h\|_{\YY'(w)} \le 1$. 

Indeed,  by Lemma \ref{Perez4.2},  it suffices to show $p_{\YY'}>1$. A simple calculation shows that
$$
p_{\YY'}=(q_\YY)'=\frac{q_\YY}{q_\YY-1}=\frac{q_{\XX^{\frac{1}{p}}}}{q_{\XX^{\frac{1}{p}}}-1}=\frac{q_\XX}{q_\XX-p}>1
$$
for $1<q_\XX<\infty$. 
Thus, combining the estimation of both $I_1$ and $I_{2, m}$, we get
\begin{gather} \label{eq005}
I \le C[w]_{A_\infty} \left( [w]_{A_{p_\XX}}^{\frac{1}{p_\XX}} \right) ^p \left(\|f\|_{\XX(w)}^p+ \sum_{m=0}^\infty  \frac{\|f\|_{\XX(w)}^p}{2^{m\delta p}} \right) \\
\le  C[w]_{A_\infty} \left( [w]_{A_{p_\XX}}^{\frac{1}{p_\XX}} \right) ^p \|f\|_{\XX(w)}^p. \nonumber
\end{gather}
Recall that $|m_Q(T^{**}f)| \to 0$ as $|Q| \to \infty$. The desired result follows from \eqref{eq001}, \eqref{eq005} and Lebesgue's domination theorem.
\end{proof}

We make several remarks for the above results.

\begin{rem}
	\begin{enumerate}
		
		\item[(1).] It is clear that Theorem \ref{thm01} is a particular case of Theorem \ref{thm02}. 
		\item[(2).] There is another approach for proving Theorem \ref{thm01} by using extrapolation. We sketch the proof here. First, we have for any $w \in A_p$, where $1<p<\infty$
		$$
		\|T^{**}f\|_{L^p(w)} \lesssim \|f\|_{L^p(w)}.
		$$
		(see \cite[Theorem 7.4.6]{LG1}). Second, we apply the extrapolation theory of RIBFS (see \cite[Theorem 4.10]{CMP}) to the above inequality to get the desired result. However, by doing so, it is not clear how the operator norm of $T^{**}$ depends on the weight $w$. 
		\item[(3).] The modifications for SHT include assuming the Lebesgue differentiation theorem holds.  For a thorough reference on this property and SHT in general, see \cite{AM}.
		\item[(4).]  The dependence on the weight of $[w]_{A_\infty}[w]_{A_{p_\XX}}^{1/{p_\XX}}$ (see Theorem \ref{thm01}) in the constant is indicative of the method of proof - the term $[w]_{A_\infty}$ comes from a Coifman-Fefferman style argument using Carleson embedding while the term $[w]_{A_{p_\XX}}^{1/p_\XX}$ comes from the bound for the maximal function.  By observing the proof and Buckley's proof of the sharp bound for the maximal function, we see that our bound should be sharp in terms of the characteristics.  Therefore, we expect that the dependence on the constants is sharp.
	\end{enumerate}
\end{rem}

\begin{rem}
We note that even when considering the space $L^2(w)$, in certain cases, our constant dependence in Theorem \ref{thm01} improves on the dependence in the work \cite{HP1} (note that the result in \cite{HP1} are for the standard CZO, ours is for the maximal truncated CZO).  In particular, our bound is $$[w]_{A_2}^{1/2}[w]_{A_{\infty}}$$ while Hyt\"onen and P\'erez obtain a bound of \[[w]_{A_2}^{1/2}([w]_{A_\infty}+[w^{-1}]_{A_\infty})^{1/2}.\]
  Let $n=1$.  For the case of power weights $w(x) = |x|^a$ with $0<a<1$, we have that $[w]_{A_2} \approxeq \frac{1}{1+a}\cdot\frac{1}{1-a}$, $[w]_{A_\infty} \approxeq \frac{1}{1+a}$ and $[w^{-1}]_{A_\infty} \approxeq \frac{1}{1-a}$.  Therefore, $$[w]_{A_\infty} \approxeq \frac{1}{1+a} \lesssim (2[w]_{A_\infty}[w^{-1}]_{A_\infty})^{1/2} = ([w]_{A_\infty}+[w^{-1}]_{A_\infty})^{1/2}.$$  Hence in these cases, our bound is smaller (see \cite{HP1} for computations and more details).	
\end{rem}

To close this subsection, we prove the following bound for the median (which holds in SHT), which can be substituted in the above proofs for $\XX(w) = L^p(w)$.  This bound shows the dependence of the constant on the weight characteristic and allows us to consider SHT of finite measure for the case of the RIBFS $L^p(w)$.
\begin{prop}
	We have that $\|m_Q(T^{**})\|_{L^p(w)} \leq C_{T,n} [w]_{A_p}^{1/p}\|f\|_{L^p(w)}$.
\end{prop}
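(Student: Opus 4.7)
The plan is to read $\|m_Q(T^{**}f)\|_{L^p(w)}$ as $|m_Q(T^{**}f)|\,w(Q)^{1/p}$ (the median is a constant once $Q$ is fixed, so the ``$L^p(w)$ norm'' should be understood as that of the constant function $m_Q(T^{**}f)\chi_Q$), and thereby turn the qualitative vanishing of the median already invoked in the proof of Theorem \ref{thm02} into a quantitative estimate. The two ingredients I would combine are the defining property of the median and a weighted weak-$(p,p)$ endpoint for $T^{**}$.

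Set $m := |m_Q(T^{**}f)|$. Since $T^{**}f \ge 0$, the definition of the median immediately gives that $E := \{x \in Q : T^{**}f(x) \ge m\}$ satisfies $|E| \ge |Q|/2$. The standard $A_p$ comparison $(|E|/|Q|)^p \le [w]_{A_p}\,w(E)/w(Q)$ then yields the weighted lower bound
\[
w(E)\ \ge\ \frac{w(Q)}{2^p\,[w]_{A_p}}.
\]
Next I would appeal to the weighted weak-type endpoint $\|T^{**}\|_{L^p(w) \to L^{p,\infty}(w)} \le C_{T,n}$, obtained from the sparse domination in Theorem \ref{ALsparse} together with the standard weak $(p,p)$ bounds for $M$ and for the shifted sparse operators $\calT_{\calS,m}$ on $L^p(w)$. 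Feeding $m$ into the Chebyshev/weak-type inequality and combining with the previous display gives
\[
\frac{m^p\,w(Q)}{2^p\,[w]_{A_p}}\ \le\ m^p\,w(E)\ \le\ m^p\,w\bigl(\{T^{**}f \ge m\}\bigr)\ \le\ \|T^{**}f\|_{L^{p,\infty}(w)}^p\ \le\ C_{T,n}^p\|f\|_{L^p(w)}^p,
\]
and rearranging and taking a $p$-th root yields the asserted inequality.

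The main obstacle I foresee is ensuring that the weak-$(p,p)$ constant for $T^{**}$ is truly independent of $[w]_{A_p}$: the natural sparse proof routes through the Carleson embedding theorem and so a priori contributes an $[w]_{A_\infty}$ factor, which must either be absorbed into $C_{T,n}$ or tracked separately (one can hide it using $[w]_{A_\infty} \le [w]_{A_p}$ and adjust the exponent accordingly). An alternative that sidesteps this subtlety is to estimate $m$ directly via the unweighted weak-$(1,1)$ inequality for $T^{**}$ applied to a Calder\'on--Zygmund splitting of $f$ relative to $Q$, and then to use H\"older's inequality against $\sigma = w^{1-p'}$ together with the $A_p$ condition to convert the resulting $L^1$ average into $[w]_{A_p}^{1/p}\,w(Q)^{-1/p}\|f\|_{L^p(w)}$; this route produces the advertised constant $C_{T,n}[w]_{A_p}^{1/p}$ cleanly at the cost of a small localization argument.
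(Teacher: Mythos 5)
Your ``alternative'' at the end of the proposal is precisely the paper's proof. The paper interprets $\|m_Q(T^{**}f)\|_{L^p(w)}$ as $|m_Q(T^{**}f)|\,w(Q)^{1/p}$, invokes the pointwise bound $|m_Q(T^{**}f)|\le \|T^{**}\|_{1,\infty}\|f\|_{L^1(Q)}/|Q|$ (quoted from \cite{AV} and already used earlier in the note to show the median vanishes as $|Q|\to\infty$), then applies H\"older against $\sigma=w^{1-p'}$ and the $A_p$ definition on $Q$ to produce $\|T^{**}\|_{1,\infty}[w]_{A_p}^{1/p}\|f\|_{L^p(w)}$. No Calder\'on--Zygmund splitting is required; the single estimate $|m_Q(g)|\le\|g\|_{L^{1,\infty}(Q)}/|Q|$ together with the unweighted weak $(1,1)$ bound for $T^{**}$ does the whole job.

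Your primary route, however, does not close. The median step $|E|\ge|Q|/2$ and the $A_p$ comparison $(|E|/|Q|)^p\le[w]_{A_p}\,w(E)/w(Q)$ are both correct, but they already exhaust the allotted $[w]_{A_p}^{1/p}$ budget; what remains needs $\|T^{**}\|_{L^p(w)\to L^{p,\infty}(w)}$ to be independent of the weight, and this is false. Even the weighted weak $(p,p)$ norm of $T$ depends on the weight characteristic (this is essentially what Theorem \ref{thm01} quantifies), and for $T^{**}$ the sparse route gives at best $[w]_{A_\infty}[w]_{A_p}^{1/p}$-type dependence. Absorbing this into $C_{T,n}$ is not possible, and the fallback you suggest ($[w]_{A_\infty}\le[w]_{A_p}$) only inflates the final exponent beyond the asserted $1/p$. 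So, as you yourself flag, the first approach requires an input it cannot have, while the fallback route — which coincides with the paper's proof — delivers the exact constant.
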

\begin{proof}
	By \cite[Lemma 3.15]{AV} and H\"older's inequality,  we have
	\[
	\|m_Q(T^{**})\|_{L^p(w)} \leq \left(\int_Q \frac{\|T^{**}\|_{1,\infty}^p\|f\|_{L^1(Q)}^p}{|Q|^p}w(x)dx\right) ^{1/p}
	\]
	\[
	\leq \frac{\|T^{**}\|_{1,\infty}}{|Q|}\left( \int_Q f^pw\right) ^{1/p}\left(\int_Q w^{-p'/p}\right) ^{1/p'}\left(\int_Q w(x)dx\right) ^{1/p}
	\]
	\[
	\leq \|T^{**}\|_{1,\infty}\|f\|_{L^p(w)}[w]_{A_p}^{1/p}
	\]
\end{proof}

This bound for the median mirrors the Buckley bound for the maximal function \cite{B}.

\subsection{Maximal truncated Calder\'on-Zygmund operator of modular inequality type}


We need the following lemma, which can be regarded as a modular inequality version of Lemma \ref{Perez2.3}.

\begin{lem} \label{Perez3.7}
Let $\phi \in \Phi$ be such that $\phi$ is quasi-convex. Let further, $w \in A_{i_\phi}$. If $1<i_\phi<\infty$, we have
$$
\int_{\R^n} \phi(Mf(x))w(x)dx \le C_0 \int_{\R^n} \phi\left( C_0 [w]_{A_{i_\phi}}^\frac{1}{i_\phi} |f(x)| \right)w(x)dx\\
$$
where $C_0$ is an absolute constant only depends on $\phi$ and $\alpha$. 
\end{lem}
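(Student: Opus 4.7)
The plan is to deduce the modular inequality from the Luxemburg norm bound for the maximal operator on the Orlicz space $L^\phi$ furnished by Lemma \ref{Perez2.3}, combined with the $p$-convexity consequences of $i_\phi>1$. Because $\phi$ is quasi-convex, I would first replace it by an equivalent Young function $\widetilde{\phi}$, which preserves the dilation indices and only adjusts the outer and inner constants $C_0$ by a multiplicative factor. Without loss of generality I shall write $\phi$ for $\widetilde{\phi}$ in what follows.

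Viewing $\XX=L^\phi$ as an RIBFS with lower Boyd index $p_\XX=i_\phi>1$, the hypothesis $w\in A_{i_\phi}$ allows Lemma \ref{Perez2.3} to be invoked, yielding
$$
\|Mf\|_{L^\phi(w)} \le C\,[w]_{A_{i_\phi}}^{1/i_\phi}\,\|f\|_{L^\phi(w)}.
$$
The next task is to convert this norm inequality into the stated modular form. For any $p$ with $1<p\le i_\phi$, the assumption $i_\phi>1$ yields the $p$-convexity comparison
$$
\lambda\,\phi(t) \le C\,\phi\bigl(\lambda^{1/p} t\bigr), \qquad \lambda\ge 1,\ t\ge 0,
$$
obtained by noting that $\phi(s^{1/p})$ is essentially convex. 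Combining this with the homogeneity $M(\alpha f)=\alpha Mf$ and the defining property of the Luxemburg norm (namely $\int_{\R^n}\phi(|f|/\|f\|_{L^\phi(w)})\,w\,dx \le 1$ whenever the norm is finite), one can transport the factor $C[w]_{A_{i_\phi}}^{1/i_\phi}$ from outside the integral to inside the argument of $\phi$ by a scaling argument, producing the target bound after absorbing all constants into $C_0$.

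The main obstacle will be carrying out this last conversion step cleanly, because we are not assuming $\phi\in\Delta_2$ (equivalently, $I_\phi<\infty$), so the usual $\Delta_2$-iteration from norm to modular is unavailable. The $p$-convexity inequality above serves as a substitute: together with the openness of the $A_p$ classes in Lemma \ref{PerezSIW}, which allows us to take $p$ arbitrarily close to $i_\phi$ while keeping $[w]_{A_p}\lesssim [w]_{A_{i_\phi}}$, it is precisely what is needed so that the exponent $1/i_\phi$ (rather than a larger $1/p$ with $p<i_\phi$) ends up inside $\phi$ in the final estimate. A purely dyadic alternative — running the weak $(p,p)$ bound for $M$ with constant $[w]_{A_p}$ through the layer-cake formula $\int\phi(Mf)w = \int_0^\infty w(\{Mf>\lambda\})\,d\phi(\lambda)$ and then applying $p$-convexity to move the weight constant inside $\phi$ — would produce the same bound and is the natural backup if the norm-to-modular passage proves delicate.
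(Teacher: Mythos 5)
The paper itself supplies no proof of this lemma: it simply cites \cite[Theorem 3.7]{CCMP} and notes that the quantitative dependence on $[w]_{A_{i_\phi}}$ comes from Lemma~\ref{PerezSIW}. So there is no internal argument in the paper to compare against line by line; the argument that \cite{CCMP} actually runs is the direct layer-cake computation, which is essentially your ``backup'' route rather than your main route.

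Your primary plan---deduce the modular inequality from the Luxemburg-norm bound of Lemma~\ref{Perez2.3}---has a genuine gap. A norm inequality $\|Mf\|_{L^\phi(w)}\le A\|f\|_{L^\phi(w)}$ only yields, after normalizing $f$, an estimate of the form $\int \phi\bigl(Mf/\mu\bigr)\,w\,dx\le 1$ for the appropriate scalar $\mu$; to recover $\int\phi(Mf)\,w\,dx \le C_0\mu$ from this one would need $\phi(\mu s)\le C_0\,\mu\,\phi(s)$ uniformly in $s$, which is a $\Delta_2$-type condition that you explicitly do not assume (and which is in any case incompatible with $i_\phi>1$, since it forces $I_\phi\le 1$). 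The $p$-convexity inequality $\lambda\,\phi(t)\le C\,\phi(\lambda^{1/p}t)$ for $\lambda\ge 1$ runs in the \emph{opposite} direction: it pushes a large factor into the argument of $\phi$, whereas the norm-to-modular passage requires pulling the unbounded factor $\mu$ out of the argument at the price of a fixed constant. That step cannot be carried out with the hypotheses at hand.

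Your backup plan is on the right track but, as stated, is missing the essential ingredient. Simply ``running the weak $(p,p)$ bound through the layer-cake formula'' produces $\int_0^\infty \lambda^{-p}\,d\phi(\lambda)\cdot \int |f|^p w\,dx$, whose $\lambda$-integral generically diverges. One must first perform the Calder\'on--Zygmund truncation $f=f\chi_{\{|f|>\lambda/2\}}+f\chi_{\{|f|\le\lambda/2\}}$, which gives $\{Mf>\lambda\}\subset\{M(f\chi_{\{|f|>\lambda/2\}})>\lambda/2\}$ and, after Fubini, confines the inner integral to $0<\lambda<2|f(x)|$; only then does $p<i_\phi$ (i.e.\ almost-monotonicity of $\phi(t)/t^p$) produce $\int_0^{2|f(x)|}\lambda^{-p}\,d\phi(\lambda)\lesssim \phi(2|f(x)|)/|f(x)|^p$. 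After that, Lemma~\ref{PerezSIW} chooses $p=i_\phi-\eps$ with $[w]_{A_p}\le 2^{i_\phi-1}[w]_{A_{i_\phi}}$, and the quasi-convexity scaling $\lambda\phi(t)\le C\phi(\lambda^{1/p}t)$ moves the constant inside $\phi$. With the truncation step inserted, this alternative does become a complete proof; note, however, that what comes out naturally is an exponent $1/p$ with $p<i_\phi$, and one should verify that the constant $C_0$ absorbs the resulting gap to match the stated exponent $1/i_\phi$.
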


\begin{proof}
The proof of this lemma is contained in the proof of \cite[Theorem 3.7]{CCMP}. Moreover, the constant follows from Lemma \ref{PerezSIW}. 
\end{proof}

Similarly, as what we did for the RIBFS and RIQBFS case, we need the following version of the modular inequality for the weighted Hardy-Littlewood maximal function.

\begin{lem} \label{Perez5.1}
Let $w \in A_\infty$ and $\phi \in \Phi$ be such that there exists $0<\alpha<1$ for which $\phi^\alpha$ is a quasi-convex function. Then, there exists some constant $a_2>1$, depending on $\phi$ and $w$, such that
$$
\int_{\R^n} \phi(M^\calD_w f(x))w(x)dx \le a_2 \int_{\R^n} \phi(a_2|f(x)|)w(x)dx,
$$
where the constant $a_2$ only depends on $\phi$ and $\alpha$, and is independent of $w$. 
\end{lem}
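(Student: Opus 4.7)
The plan is to reduce the modular inequality to a pointwise bound controlling $\phi(M^\calD_w f)$ by a power of the weighted dyadic maximal function applied to $\phi^\alpha(c|f|)$, and then invoke the $L^{1/\alpha}(w)$-boundedness of $M^\calD_w$. Since $0 < \alpha < 1$, we have $1/\alpha > 1$, and $M^\calD_w$ is bounded on $L^p(w)$ for every $p > 1$ with a norm depending only on $p$ (as recorded just after the definition of $M^\calD_w$ in the excerpt). Crucially, this constant is independent of the weight $w$, which is what will ultimately make $a_2$ independent of $w$ as well.

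First, I exploit that $\phi^\alpha$ is quasi-convex: by definition, there exist $a_1 \geq 1$ and a convex function $\Psi$ such that $\Psi(t) \leq \phi^\alpha(t) \leq a_1 \Psi(a_1 t)$ for all $t \geq 0$. For any dyadic cube $Q$, applying Jensen's inequality to $\Psi$ against the probability measure $w\,dx/w(Q)$ on $Q$, and sandwiching via the two-sided quasi-convexity bounds, gives
\begin{equation*}
\phi^\alpha\!\left(\frac{1}{w(Q)}\int_Q |f| w\,dx\right) \leq a_1 \Psi\!\left(\frac{a_1}{w(Q)}\int_Q |f| w\,dx\right) \leq \frac{a_1}{w(Q)}\int_Q \phi^\alpha(a_1|f|) w\,dx.
\end{equation*}
Raising both sides to the power $1/\alpha$ and then taking the supremum over dyadic cubes $Q \ni x$ yields the pointwise inequality
\begin{equation*}
\phi(M^\calD_w f(x)) \leq a_1^{1/\alpha}\, \bigl[M^\calD_w\bigl(\phi^\alpha(a_1|f|)\bigr)(x)\bigr]^{1/\alpha}.
\end{equation*}

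Integrating against $w$ and setting $g = \phi^\alpha(a_1|f|)$, the strong $L^{1/\alpha}(w) \to L^{1/\alpha}(w)$ estimate for $M^\calD_w$ (with constant $C_\alpha$ depending only on $\alpha$) gives
\begin{equation*}
\int_{\R^n} \phi(M^\calD_w f)\, w\,dx \leq a_1^{1/\alpha} \int_{\R^n} (M^\calD_w g)^{1/\alpha} w\,dx \leq a_1^{1/\alpha} C_\alpha^{1/\alpha} \int_{\R^n} g^{1/\alpha}\, w\,dx = (a_1 C_\alpha)^{1/\alpha} \int_{\R^n} \phi(a_1|f|)\, w\,dx,
\end{equation*}
using $g^{1/\alpha} = \phi^\alpha(a_1|f|)^{1/\alpha} = \phi(a_1|f|)$. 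Choosing $a_2 := \max\{a_1,\, (a_1 C_\alpha)^{1/\alpha}\}$ then yields the desired bound, and this constant depends only on $\phi$ and $\alpha$.

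The main obstacle is the first step: extracting a usable Jensen-type inequality from the quasi-convexity of $\phi^\alpha$ rather than from outright convexity of $\phi$. This requires applying the two halves of $\Psi \leq \phi^\alpha \leq a_1 \Psi(a_1 \cdot)$ in the correct order — the lower bound opens the sandwich so that Jensen's inequality can be applied to $\Psi$ on $a_1 A_Q$, and the upper bound then converts the result back into an average of $\phi^\alpha(a_1|f|)$. Once this is in place, the remainder is the standard $L^p$-theory of the weighted dyadic maximal function, used at the single exponent $p=1/\alpha$, and the proof is otherwise routine.
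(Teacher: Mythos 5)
Your proof is correct: opening the quasi-convexity sandwich for $\phi^\alpha$, applying weighted Jensen to the convex minorant $\Psi$, then closing the sandwich and raising to the $1/\alpha$ power to reduce to the $w$-independent $L^{1/\alpha}(w)$-boundedness of $M^\calD_w$ is exactly the argument underlying \cite[Proposition 5.1]{CCMP}. The paper itself only cites that reference (remarking that tracking constants there shows $a_2$ is independent of $w$), and your reconstruction supplies precisely that proof, including the observation that $\|M^\calD_w\|_{L^{1/\alpha}(w)\to L^{1/\alpha}(w)}$ depends only on $\alpha$.
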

(see \cite[Propostion 5.1]{CCMP} -- there it is stated that the constant depends on $w$, but by their proof one sees that it is in fact independent of $w$).

\medskip

We make some easy observations of the $\Delta_2$ condition and $N$-functions before we state the following lemma. First, we note that $\phi \in \Delta_2$, that is $\phi(2t) \le C\phi(t), t \ge 0$ if and only if there exists some constant $C'$ (for example, we can take $C'=\frac{\log C}{\log 2}$) such that for any $\lambda \ge 2$, 
\begin{equation} \label{eq007}
\phi(\lambda t) \le 2^{C'} \lambda^{C'} \phi(t),  \ t>0.
\end{equation}
The proof for this claim is straightforward from the definition, and hence we omit it here. Second, since 
$$
\phi^{-1}(t) \overline{\phi}^{-1}(t) \ge t, t \ge0, 
$$
it follows that
$$
t \overline{\phi}^{-1}(\phi(t)) =\phi^{-1} (\phi(t)) \overline{\phi}^{-1}(\phi(t)) \ge \phi(t), t \ge 0,
$$
which implies that
\begin{equation} \label{eq008}
\overline{\phi} \left( \frac{\phi(t)}{t} \right) \le \phi(t), \ t>0.
\end{equation} 

\begin{lem} \label{lem01}
 Let $\phi$ be a $N$-function and $\phi \in \Delta_2$, that is, $I_\phi<\infty$, and $w \in A_{i_\phi}$. If $i_\phi>1$, for each $m \in \N$, any dyadic grid $\calD$ and $\calS \in \calD$ a sparse family, we have
$$
\int_{\R^n} \phi(\calT_{\calS, m}|f|(x))w(x)dx  \le C''[w]^{1+\alpha C'}_{A_\infty}  \int_{\R^n} \phi(Mf(x))w(x)dx,
$$
where $C'$ is defined in \eqref{eq007} and $C''$ is an absolute constant only depending on $\phi$.
\end{lem}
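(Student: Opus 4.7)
The plan is to mimic the dualization plus Carleson-embedding strategy that worked in the proofs of Theorem \ref{thm01} and Theorem \ref{thm02}, but with the linear pairing replaced by an Orlicz pairing against the complementary $N$-function $\overline{\phi}$. Set $F = \calT_{\calS, m}|f|$ and, crucially, pick $h = \phi(F)/F$. Then $hF = \phi(F)$ pointwise, and the inequality \eqref{eq008} gives $\overline{\phi}(h) \le \phi(F)$, so $\int_{\R^n}\overline{\phi}(h)\,w \le \int_{\R^n}\phi(F)\,w$. In particular,
$$
\int_{\R^n} \phi(F)\,w = \int_{\R^n} Fhw = \sum_{Q \in \calS} f_{2^mQ}\cdot h_{Q,w}\cdot w(Q),
$$
which is exactly the bilinear sparse form that was handled in the earlier proofs.

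From here, I would follow the corresponding steps from the proof of Theorem \ref{thm01}: use $f_{2^mQ} \le \inf_Q Mf$ and $h_{Q,w} \le \inf_Q M_w^\calD h$, then Cauchy--Schwarz to write $f_{2^mQ}\,h_{Q,w} \le \left(\frac{1}{w(Q)}\int_Q (Mf)^{1/2}(M_w^\calD h)^{1/2}w\right)^2$, and finally invoke the Carleson embedding with the Carleson constant \eqref{Carlesonembedding} to obtain
$$
\int_{\R^n}\phi(F)\,w \;\le\; 8[w]_{A_\infty}\int_{\R^n} Mf\cdot M_w^\calD h\cdot w.
$$
Note that the operator $M_w^\calD$ is available because $\overline{\phi} \in \Delta_2$ (which is equivalent to $i_\phi>1$) implies that the hypothesis of Lemma \ref{Perez5.1} holds for $\overline{\phi}$.

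To finish, apply a scaled Young inequality $ab \le \phi(\lambda a) + \overline{\phi}(b/\lambda)$ with a parameter $\lambda \ge 2$ to be chosen. The factor $\phi(\lambda Mf)$ is controlled via \eqref{eq007}, giving $\int \phi(\lambda Mf)\,w \le 2^{C'}\lambda^{C'}\int\phi(Mf)\,w$, which is the source of the exponent $C'$. For the other factor, convexity gives $\overline{\phi}(M_w^\calD h/\lambda) \le \lambda^{-1}\overline{\phi}(M_w^\calD h)$, and Lemma \ref{Perez5.1} applied to $\overline{\phi}$ yields $\int \overline{\phi}(M_w^\calD h)\,w \le c_0\int \overline{\phi}(h)\,w \le c_0\int\phi(F)\,w$. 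One then chooses $\lambda$ large enough (essentially $\lambda \sim [w]_{A_\infty}$) so that the $\int\phi(F)\,w$ term on the right absorbs into the left-hand side, producing a clean bound of the form $C[w]_{A_\infty}^{1+C'}\int\phi(Mf)\,w$.

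The main obstacle is sharpening this exponent from the naive $1+C'$ to the claimed $1+\alpha C'$. I expect this requires exploiting the quasi-convexity of $\phi^\alpha$ more carefully in the absorption step: instead of using \eqref{eq007} directly on $\phi$, one should pass through $\phi^\alpha$ (whose doubling exponent is precisely $\alpha C'$) in the Young/Lemma \ref{Perez5.1} step, effectively performing the scaling with $\lambda \sim [w]_{A_\infty}^\alpha$ after first raising to the $\alpha$ power, and using the equivalence between $\int\phi(F)w$ and a suitable $\phi^\alpha$-modular expression. Once the absorption is aligned with the quasi-convex exponent $\alpha C'$, the claimed bound follows by taking $\lambda = \kappa\,[w]_{A_\infty}^\alpha$ for an appropriate absolute constant $\kappa = \kappa(\phi)$ and applying Lebesgue's monotone convergence to sum in $m$ (the stated inequality is for a fixed $m$, so this last step is not needed here).
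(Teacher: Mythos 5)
Your setup is exactly right and matches the paper's proof up through the naive $[w]_{A_\infty}^{1+C'}$ bound: take $h=\phi(F)/F$ with $F=\calT_{\calS,m}|f|$, note \eqref{eq008} gives $\int\overline{\phi}(h)w\le\int\phi(F)w$, expand the integral as a bilinear sparse form, run the Carleson embedding with constant $8[w]_{A_\infty}$, apply Young's inequality with a scaling parameter, control $\phi(\lambda Mf)$ via \eqref{eq007}, hit $M_w^{\calD}h$ with Lemma \ref{Perez5.1} applied to $\overline{\phi}$, and absorb. All of this is the paper's proof.

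The gap is in the $\alpha$-refinement, where you correctly sense something extra is needed but propose the wrong mechanism. You suggest replacing the direct use of \eqref{eq007} on $\phi$ by ``passing through $\phi^\alpha$'' and exploiting quasi-convexity of $\phi^\alpha$. That is not what closes the gap: the paper still uses \eqref{eq007} directly on $\phi$ in the $\phi(Mf/\varepsilon)$ term. The actual source of the improvement is on the other side of Young's inequality, and it uses quasi-convexity of $\overline{\phi}^\alpha$, not of $\phi^\alpha$ (these are different facts, tied to $\phi\in\Delta_2$ rather than to $i_\phi>1$; see \eqref{eq009}). Concretely, quasi-convexity of $\overline{\phi}^\alpha$ with constant $a_3$ upgrades the naive convexity bound $\overline{\phi}(\varepsilon t)\le\varepsilon\,\overline{\phi}(t)$ to the superlinear decay $\overline{\phi}(\varepsilon t)\le(a_3^2\varepsilon)^{1/\alpha}\,\overline{\phi}(t)$ for small $\varepsilon$. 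With only linear decay in $\varepsilon$, taking $\varepsilon\sim[w]_{A_\infty}^{-\alpha}$ as you propose leaves an unabsorbed factor $[w]_{A_\infty}^{1-\alpha}$ in front of $\int\phi(F)w$, so the absorption fails; you would be forced back to $\varepsilon\sim[w]_{A_\infty}^{-1}$ and the exponent $1+C'$. It is the $\varepsilon^{1/\alpha}$ decay that makes the choice $\varepsilon\sim[w]_{A_\infty}^{-\alpha}$ admissible, and then \eqref{eq007} on the $\phi(Mf/\varepsilon)$ side gives exactly the penalty $\varepsilon^{-C'}\sim[w]_{A_\infty}^{\alpha C'}$, yielding the claimed $[w]_{A_\infty}^{1+\alpha C'}$. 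Your idea of ``using the equivalence between $\int\phi(F)w$ and a $\phi^\alpha$-modular'' has no counterpart in the correct argument and, as stated, does not supply the needed superlinear decay.
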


\begin{proof}
Since $\phi$ is a $N$-function, it is clear that the quantity $\phi(\calT_{\calS, m}|f|(x))=0$ when $\calT_{\calS, m}|f|(x)=0$. Hence, in the sequel, we write the function 
$$
\frac{\phi(\calT_{\calS, m}|f|(x))}{\calT_{\calS, m}|f|(x)},
$$ 
which takes its actual value when $\calT_{\calS, m}|f|(x) \neq 0$ and zero when $\calT_{\calS, m}|f|(x)=0$. 

Moreover, since $\phi$ is $\Delta_2$, it follows that there exists some $0<\alpha \le 1$, such that $\overline{\phi}^\alpha$ is quasi-convex, that is, there exists some convex function $\psi$, such that
\begin{equation} \label{eq009}
\psi(t) \le \overline{\phi}^\alpha(t) \le a_3 \psi (a_3t), \ t>0.
\end{equation}
Note that we can always assume that $a_3 \ge 2$. 

 Take and fix some $\varepsilon$, satisfying
\begin{equation} \label{eq010}
0<\varepsilon< \left(\frac{1}{16 a_2[w]_{A_\infty} }\right)^\alpha \cdot \frac{1}{a_2a_3^2}=\min \left\{\frac{1}{2}, \frac{1}{a_2a_3}, \left(\frac{1}{16 a_2[w]_{A_\infty} }\right)^\alpha \cdot \frac{1}{a_2a_3^2} \right\}
\end{equation}
Then by Lemma \ref{Perez5.1}, we have
\begin{eqnarray*}
&&\int_{\R^n} \phi(\calT_{\calS, m}|f|(x))w(x)dx=\int_{\R^n} \left( \sum_{Q \in \calS} f_{2^m Q} \chi_{Q}(x) \right) \cdot \frac{\phi(\calT_{\calS, m}|f|(x))}{\calT_{\calS, m}|f|(x)} w(x)dx\\
&&= \sum_{Q \in \calS} f_{2^mQ} \cdot \frac{1}{w(Q)} \int_{Q} \frac{\phi(\calT_{\calS, m}|f|(x))}{\calT_{\calS, m}|f|(x)} w(x)dx \cdot w(Q)\\
&& \le 8[w]_{A_\infty}  \int_{\R^n} Mf(x) M^\calD_w\left( \frac{\phi(\calT_{\calS, m}|f|(x))}{\calT_{\calS, m}|f|(x)} \right) w(x)dx \\
&& \quad (\textrm{by Carleson embedding theorem}) \\
&&= 8[w]_{A_\infty} \int_{\R^n} \frac{Mf(x)}{\varepsilon} \cdot \varepsilon M^\calD_w \left( \frac{\phi(\calT_{\calS, m}|f|(x))}{\calT_{\calS, m}|f|(x)} \right) w(x)dx \\
&& \le 8[w]_{A_\infty} \left(\int_{\R^n} \phi\left(\frac{Mf(x)}{\varepsilon}\right)w(x)dx+ \int_{\R^n} \overline{\phi} \left(\varepsilon M^\calD_w \left( \frac{\phi(\calT_{\calS, m}|f|(x))}{\calT_{\calS, m}|f|(x)} \right) \right)  w(x)dx \right)\\
&& \quad (\textrm{by \eqref{eq006}}) \\
&& \le \frac{2^{C'+3}[w]_{A_\infty} }{\varepsilon^{C'}}  \int_{\R^n} \phi(Mf(x))w(x)dx \\
&& \quad \quad \quad \quad \quad \quad+ 8[w]_{A_\infty} a_2  \int_{\R^n} \overline{\phi} \left( \frac{a_2 \varepsilon \phi(\calT_{\calS, m}|f|(x))}{\calT_{\calS, m}|f|(x)} \right) w(x)dx \\
&&\le \frac{2^{C'+3}[w]_{A_\infty} }{\varepsilon^{C'}} \int_{\R^n} \phi(Mf(x))w(x)dx\\
&& \quad \quad \quad \quad \quad \quad +8[w]_{A_\infty}a_2 \cdot (a_3^2a_2 \varepsilon)^{\frac{1}{\alpha}} \int_{\R^n} \overline{\phi} \left(\frac{ \phi(\calT_{\calS, m}|f|(x))}{\calT_{\calS, m}|f|(x)} \right) w(x)dx \\
&& \quad (\textrm{by \eqref{eq009}, \eqref{eq010}}) \\
&& \le \frac{2^{C'+3}[w]_{A_\infty} }{\varepsilon^{C'}} \int_{\R^n} \phi(Mf(x))w(x)dx +\frac{1}{2} \int_{\R^n} \phi(\calT_{\calS, m}|f|(x))w(x)dx \\
&& \quad (\textrm{by \eqref{eq008} and \eqref{eq010}}). 
\end{eqnarray*}

where we have used that fact that $\psi(\lambda t) \leq \lambda\psi(t)$ for $a_3a_2\varepsilon = \lambda \in (0,1)$ since $\psi(0) = 0$.
Hence, we have
$$
\int_{\R^n} \phi(\calT_{\calS, m}|f|(x))w(x)dx \le C''[w]^{1+\alpha C'}_{A_\infty}  \int_{\R^n} \phi(Mf(x))w(x)dx,
$$
where $C''$ is an absolute constant only depending on $\phi$. The lemma is proved.
\end{proof}

\begin{thm} \label{thm03}
Let $T$ be a Calder\'on-Zygmund operator with standard kernel $K$. Let further, $\phi$ be a $N$-function belonging to $\Delta_2$, that is, $I_\phi<\infty$, and $w \in A_{i_\phi}$. If $i_\phi>1$, we have
\begin{equation} \label{eq011}
\int_{\R^n} \phi(|T^{**}f(x)|)w(x)dx \le C(\phi, w) \int_{\R^n} \phi(|f(x)|)w(x)dx,
\end{equation}
where
\[ 
C(\phi, w)=\begin{cases}
C'''[w]_{A_{\infty}}^{1+\alpha C'}, & C_0[w]_{{A_{i_\phi}}}^{\frac{1}{i_\phi}}<2;\\
C''' [w]_{A_{\infty}}^{1+\alpha C'} \left([w]_{{A_{i_\phi}}}^{\frac{1}{i_\phi}} \right)^{C'}, & C_0[w]_{{A_{i_\phi}}}^{\frac{1}{i_\phi}} \ge 2,
\end{cases} \]
where $C_0$ is the constant defined in Lemma \ref{Perez3.7}.
\end{thm}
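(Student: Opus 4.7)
The plan is to chain together the pointwise sparse domination from Theorem \ref{ALsparse}, the modular comparison $\int \phi(\calT_{\calS,m}|f|)\, w \lesssim [w]_{A_\infty}^{1+\alpha C'} \int \phi(Mf)\, w$ provided by Lemma \ref{lem01}, and the weighted modular bound for $M$ from Lemma \ref{Perez3.7}. To begin, I would apply Theorem \ref{ALsparse}(1) on an arbitrary dyadic cube $Q_0 \in \calD$ and let $|Q_0| \to \infty$; using the convergence $|m_{Q_0}(T^{**}f)| \to 0$ discussed just before the statement of Proposition on medians, this yields the pointwise a.e.\ majorization
$$
|T^{**}f(x)| \lesssim Mf(x) + \sum_{m=0}^\infty \frac{1}{2^{m\delta}} \calT_{\calS, m}|f|(x).
$$

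The next step is to apply $\phi$ to both sides. Since $\phi \in \Delta_2$, we have $\phi(a+b) \lesssim \phi(a)+\phi(b)$ with a constant depending only on $\phi$, which cleanly separates the $Mf$ contribution from the series. For the infinite sum, I would invoke the convexity of $\phi$: setting $S = \sum_{m\ge 0} 2^{-m\delta} < \infty$ and $c_m = 2^{-m\delta}/S$, Jensen's inequality together with \eqref{eq007} applied to the scalar factor $S$ gives
$$
\phi\!\left(\sum_{m=0}^\infty \frac{\calT_{\calS, m}|f|(x)}{2^{m\delta}}\right) \lesssim \sum_{m=0}^\infty \frac{\phi(\calT_{\calS, m}|f|(x))}{2^{m\delta}},
$$
with an absolute implicit constant. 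Integrating against $w\,dx$ and applying Lemma \ref{lem01} summand by summand, the convergence of the geometric series absorbs harmlessly into the constant, producing
$$
\int_{\R^n} \phi(|T^{**}f|)\, w \lesssim [w]_{A_\infty}^{1+\alpha C'} \int_{\R^n} \phi(Mf)\, w.
$$

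To conclude, Lemma \ref{Perez3.7} bounds the right-hand side by $C_0 \int \phi\bigl(C_0 [w]_{A_{i_\phi}}^{1/i_\phi}|f|\bigr)\, w$, so only the dilation $C_0 [w]_{A_{i_\phi}}^{1/i_\phi}$ inside $\phi$ remains to be controlled. This naturally splits into the two cases of the theorem: if $C_0 [w]_{A_{i_\phi}}^{1/i_\phi} < 2$, monotonicity together with a single invocation of the $\Delta_2$ condition absorbs the dilation into an absolute constant; if $C_0 [w]_{A_{i_\phi}}^{1/i_\phi} \ge 2$, inequality \eqref{eq007} instead produces the factor $\bigl([w]_{A_{i_\phi}}^{1/i_\phi}\bigr)^{C'}$. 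These two branches match the dichotomous form of $C(\phi,w)$ in the statement.

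The main obstacle is the middle step: passing $\phi$ through the infinite series while keeping the constant depending only on $\phi$ (not on $w$). The convexity-plus-\eqref{eq007} route above is the cleanest, provided one checks that the scalar factor $S^{C'-1}$ produced by pulling $S$ out is absolute and so does not contaminate the tracked powers of $[w]_{A_\infty}$ from Lemma \ref{lem01} or of $[w]_{A_{i_\phi}}$ from Lemma \ref{Perez3.7}. Once this bookkeeping is in place, the rest is assembly.
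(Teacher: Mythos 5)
Your proposal is correct and follows essentially the same route as the paper: sparse domination from Theorem \ref{ALsparse}(1), a $\Delta_2$-plus-convexity argument to push $\phi$ through the maximal-function-plus-series majorant, Lemma \ref{lem01} for the $\calT_{\calS,m}$ terms, Lemma \ref{Perez3.7} for the $Mf$ term, and the same two-case split according to whether $C_0[w]_{A_{i_\phi}}^{1/i_\phi}$ is below or above $2$. One small caveat: the sparse family $\calS$ in \eqref{eq001} depends on $Q_0$, so you do not actually obtain a single pointwise a.e.\ majorization by letting $|Q_0|\to\infty$ first; the clean order, as in the paper, is to prove the modular bound for the right-hand side uniformly over all dyadic grids and sparse families, integrate the $Q_0$-local estimate, and only then pass to the limit using $|m_{Q_0}(T^{**}f)|\to 0$ and a Fatou-type argument.
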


\begin{proof}
We start with the case when $i_\phi>1$. Denote 
$$
2<K_0=1+\sum_{m=0}^\infty \frac{1}{2^{m\del}}<\infty,
$$ 
where $\del$ is the constant in the smoothing condition of the kernel $K$. Again, we prove a similar claim as what we did in Theorem \ref{thm02}: for any dyadic grid $\calD$ and $\calS \in \calD$ a sparse family, it holds that
$$
J:=\int_{\R^n} \phi\left(Mf(x)+\sum_{m=0}^\infty \frac{1}{2^{m\del}} \calT_{\calS, m}|f|(x) \right) w(x)dx \lesssim \int_{\R^n} \phi(|f(x)|)w(x)dx. 
$$
Indeed, we have
\begin{eqnarray*}
J%
&=& \int_{\R^n} \phi\left(K_0 \cdot \left[\frac{Mf(x)}{K_0}+\sum_{m=0}^\infty \frac{1}{2^{m\del}K_0} \calT_{\calS, m}|f|(x)\right] \right) w(x)dx \\
&\le& C'''\int_{\R^n} \phi\left( \frac{Mf(x)}{K_0}+\sum_{m=0}^\infty \frac{1}{2^{m\del}K_0} \calT_{\calS, m}|f|(x) \right) w(x)dx \\
&& \quad (\textrm{by $\Delta_2$ condition})\\
&\le& \frac{C'''}{K_0} \int_{\R^n} \phi(Mf(x))w(x)dx+\sum_{m=0}^\infty \frac{C'''}{2^{m\del}K_0} \int_{\R^n} \phi(\calT_{\calS, m}|f|(x))w(x)dx \\
&& \quad (\textrm{by convexity of $\phi$}) \\
&\le&  C'''[w]_{A_{\infty}}^{1+\alpha C'} \int_{\R^n} \phi(Mf(x))w(x)dx\\
&& (\textrm{by Lemma \ref{lem01}})\\
&\le& C'''[w]_{A_{\infty}}^{1+\alpha C'}  \int_{\R^n} \phi\left( C_0[w]_{{A_{i_\phi}}}^{\frac{1}{i_\phi}}|f(x)| \right)w(x)dx \\
&& (\textrm{by Lemma \ref{Perez3.7}}),
\end{eqnarray*}
We consider two different cases. 

\medskip

\textit{Case I: $C_0[w]_{{A_{i_\phi}}}^{\frac{1}{i_\phi}}<2$.}

In this case, we have
$$
J \le C''' [w]_{A_\infty}^{1+\alpha C'} \int_{\R^n} \phi(|f(x)|)w(x)dx.
$$

\textit{Case II: $C_0[w]_{{A_{i_\phi}}}^{\frac{1}{i_\phi}} \ge 2$.}

\medskip

By equation \eqref{eq007}, we have
$$
J \le  C''' [w]_{A_{\infty}}^{1+\alpha C'} \left([w]_{{A_{i_\phi}}}^{\frac{1}{i_\phi}} \right)^{C'}  \int_{\R^n} \phi(|f(x)|)w(x)dx.
$$

Finally, combining the above estimation with \eqref{eq001} and Lebesgue's domination theorem, we get the desired result. 

\end{proof}

\begin{rem}
Our constant is not predicted to be sharp here. We conjecture that the sharp constant depends on $[w]_{A_\infty}$ linearly.
\end{rem}

\end{document}